\DeclareMathOperator{\dir}{dir}
\DeclareMathOperator{\res}{res}
\renewcommand{\leftmark}{\textsc{\small Stratifications of Tangent Cones}}
\renewcommand{\rightmark}{\textsc{\small Erick Garcia Ramirez}} 
\newtheorem{Theorem}{Theorem}[section]
\newtheorem{Proposition}[Theorem]{Proposition}
\newtheorem{Lemma}[Theorem]{Lemma}
\newtheorem{Corollary}[Theorem]{Corollary}
\newtheorem{Fact}[Theorem]{Fact}
\theoremstyle{definition}
\newtheorem{Definition}[Theorem]{Definition}
\newtheorem{Example}[Theorem]{Example}
\newtheorem{Remark}[Theorem]{Remark}
\begin{document}
\title{Stratifications of tangent cones in real closed (valued) fields}

\author{Erick Garc\'ia Ram\'irez}

\maketitle
\begin{abstract} 
{\bf Abstract.} We introduce tangent cones of subsets of cartesian powers of a real closed field, generalising the notion of the classical tangent cones of subsets of Euclidean space. We then study the impact of non-archimedean stratifications (t-stratifications) on these  tangent cones. Our main result is that a t-stratification induces stratifications of the same nature on the tangent cones of a definable set. As a consequence, we show that the archimedean counterpart of a t-stratification induces Whitney stratifications on the tangent cones of a semi-algebraic set. The latter statement is achieved by working with the natural valuative structure of non-standard models of the real field. 
\end{abstract}

{\bf Keywords.} Tangent cones, T-stratifications, Whitney stratifications, non-archimedean real closed (valued) fields.

\begin{section}{Introduction}
 Tangent cones and regular stratifications are employed in the study of the local geometry of sets. Tangent cones generalise the tangent spaces to singular points and stratifications are (usually) partitions with adequate smoothness properties and conditions on how the pieces fit together (regularity conditions). Examples of regular stratifications are Whitney stratifications in the real Euclidean space.
 
  In the present work we are firstly interested in a generalisation of tangent cones. We define the tangent cone of any subset of the cartesian product of a real closed field, which is a straightforward translation of the classical definition of the tangent cone of a subset of $\mathbb R^n$. By focusing on the case of the real closed field being non-archimedean, we introduce a valuation-theoretic setting and we study tangent cones of subsets defined in this valuative structure. Secondly, we are interested in regular stratifications in this valued field setting. I. Halupczok introduced \emph{t-stratifications} in~\cite{immi}, a notion of non-archimedean stratifications in Henselian valued fields of equi-characteristic 0, and this is the notion we work with in this paper. Our aim is to study the impact of these stratifications on tangent cones, adding another application of these novel stratifications (see~\cite{immi} for previous applications).
 
 If $R$ is a non-archimedean real closed field (e.g. a non-principal ultrapower of $\mathbb R$) and $V$ is a proper convex valuation ring of $R$ (e.g. the set of finite numbers in $R$), we regard $R$ as an $\mathcal L_V:=\mathcal L_{or}\cup \{V\}-$structure, where $\mathcal L_{or}$ is the language of ordered rings and $V$ is a unary predicate obviously interpreted. Our main result is the following. 
 
 \begin{Theorem}\label{intro1}
  Let $X$ be an $\mathcal L_V$-definable subset of $R^n$ and $(S_i)_i$ a t-stratification of $X$. Then $(S_i)_i$ induces a t-stratification on the tangent cone $\mathcal C_p(X)$ of $X$ at the point $p$.
 \end{Theorem}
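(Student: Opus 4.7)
The plan is to exploit the multi-scale trivialization built into t-stratifications together with the definition of the tangent cone as the object capturing the infinitesimal behaviour of $X$ at $p$. First I would recall that $\mathcal C_p(X)$ is built from ``directions'' from $p$ to nearby points of $X$ at infinitesimal scales in the valuative structure on $R$ induced by $V$. After translating $p$ to the origin, this amounts to looking at rescaled copies $\lambda^{-1}(X-p)$ for $\lambda$ in the maximal ideal of $V$ and then applying a residue/standard-part construction to extract a definable subset of $R^n$.

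The candidate induced partition on $\mathcal C_p(X)$ will consist, informally, of the tangent cones at $p$ of the individual strata $S_i$, i.e.\ the pieces $T_i$ obtained by running the same rescale-and-reduce procedure on each $S_i$. The tasks are then to verify the three defining properties of a t-stratification of $\mathcal C_p(X)$: that the $T_i$ genuinely partition $\mathcal C_p(X)$, that the dimension bound $\dim T_i \le i$ is preserved, and that at every point and every ball in $\mathcal C_p(X)$ one obtains the required risometry exhibiting translation invariance along a direction of dimension at least $i$.

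The technical core is the transfer of the risometric trivializations. The t-stratification of $X$ provides, for every ball near $p$, a risometry witnessing translation invariance along a subspace of the appropriate dimension. At all sufficiently small valuative scales around $p$, these witnessing data accumulate, and the rescale-and-reduce operation that constructs $\mathcal C_p(X)$ turns a coherent family of infinitesimally close translation-invariance directions into an honest translation-invariance direction of the induced partition on $\mathcal C_p(X)$. Concretely, given a ball $B\subseteq\mathcal C_p(X)$, I would pull it back to a family of balls in $X$ at the corresponding scale near $p$ and push the risometries furnished by the t-stratification of $X$ forward through the scaling-and-residue map.

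I expect the main obstacle to be coherence: showing that risometries defined at many different scales and centres near $p$ descend to a single well-defined risometry on the tangent cone, and that no ball of $\mathcal C_p(X)$ fails the condition. This will likely require separating the analysis into the case where a ball of $\mathcal C_p(X)$ originates from data of $X$ at a genuinely infinitesimal scale and the case of ``radial'' balls concerning the cone structure itself, while keeping exact control of the dimension of the invariance direction under both the rescaling and the reduction step.
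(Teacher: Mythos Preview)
Your overall strategy---pull balls in $\mathcal C_p(X)$ back to a family of balls in $X$ near $p$ and push the translaters forward---is exactly the paper's approach. However, several concrete points are either wrong or missing.

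\medskip
\textbf{The induced strata.} Taking $T_i:=\mathcal C_p(S_i)$ does not give a partition: tangent cones of disjoint sets can overlap (only $\mathcal C_p(A\cap B)\subseteq \mathcal C_p(A)\cap\mathcal C_p(B)$ holds, and the reverse fails). The paper instead sets $\mathcal C_{p,0}:=\mathcal C_p(S_0)$ and $\mathcal C_{p,i+1}:=\mathcal C_p(S_{\leq i+1})\setminus \mathcal C_p(S_{\leq i})$, which is a genuine partition and is what makes the dimension bound and the translatability argument go through. Your task ``verify the $T_i$ partition $\mathcal C_p(X)$'' would in fact fail, not succeed.

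\medskip
\textbf{No residue step.} You describe the tangent cone as coming from a ``residue/standard-part construction''. In this paper $\mathcal C_p(X)\subseteq R^n$ is defined internally via limits of definable curves (Proposition~1.8), not by passing to $\overline R$. The residue viewpoint only enters in Section~3 for the archimedean application. Building the argument on a standard-part map would put you in the wrong ambient space.

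\medskip
\textbf{The missing technical core.} You correctly flag ``coherence'' as the obstacle, but the paper's resolution is quite specific and you have not identified its ingredients. Given a maximal ball $B=B(u,>\mu)$ in $\bigcup_{j\geq d}\mathcal C_{p,j}$, one chooses a single definable curve $\gamma(t)\in S_d$ with $\gamma(t)/t\to u$ and sets $B_t:=B(\gamma(t),>\mu+v(t))$. Two nontrivial steps follow. First, one must show $(S_i)_i$ is $d$-translatable on $B_t$ for all small $t$; this is done by contradiction using curve selection inside $B_t\cap S_{\leq d-1}$. Second, one must show the translatability direction $W_0=\operatorname{tsp}_{B_t}(S_i)_i$ is eventually constant in $t$: this uses o-minimality of $\Gamma$ (from power boundedness) and orthogonality of $\Gamma$ and $\overline R$. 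Only then can one take $\ulcorner B_t\urcorner$-definable translaters $(\alpha_{t,x})_x$ (uniformly in $t$, via compactness) and define the translater on $B$ by the explicit limit $\alpha_x(z):=\lim_{t\to 0}t^{-1}\alpha_{t,tx}(z_t)$ with $z_t:=\gamma(t)+t(z-u)$. Your sketch does not anticipate the stabilization-of-direction step, which is where the argument would otherwise break down: without a fixed $W_0$ there is no single exhibition $\pi$ to organize the limit. Finally, there is no separate ``radial ball'' case; every ball in the cone is handled by the same construction.
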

 
 The conclusion means that if we define the sets $\mathcal C_{p,0}:=\mathcal C_p(S_0)$ and $\mathcal C_{p,i+1}:=\mathcal C_{p}(S_0\cup\dots\cup S_{i+1})\setminus \mathcal C_{p}(S_0\cup\dots\cup S_{i})$ for $0\leq i<n$, then the partition $(\mathcal C_{p,i})_i$ is a t-stratification of $\mathcal C_p(X)$. The definition of these specific strata for $\mathcal C_p(X)$ is  natural. 

 For the proof of this theorem we make use of a description of $\mathcal C_p(X)$ via $\mathcal L_V$-definable differentiable curves, available whenever $X$ is $\mathcal L_V$-definable itself (Proposition~\ref{curvedef2}). This description is obtained from results of L. van den Dries and A. Lewenberg in~\cite{driesI} and~\cite{driesII} on real closed fields equipped with a proper convex valuation ring closed under all 0-definable continuous functions (T-convexity).
 
 We also explore the implications of Theorem~\ref{intro1} on classical tangent cones in $\mathbb R$. For this we fix a non-standard model $^*\mathbb R$ of $\mathbb R$ as our non-archimedean real closed field and then we say that a semi-algebraic partition $(S_i)_i$ of $\mathbb R^n$ is an archimedean t-stratification of $X\subseteq \mathbb R^n$ if $(^*S_i)_i$ is a t-stratification of $^*X$. Note that by a result in~\cite{immi}, any archimedean t-stratification is a Whitney stratification. We obtain the following.
 \begin{Theorem}\label{intro2}
  Let $X$ be a semi-algebric subset of $\mathbb R^n$ and $(S_i)_i$ be an archimedean t-stratification of $X$. Then $(S_i)_i$ induces an archimedean t-stratifications on the tangent cone $C_p(X)$ of $X$ at $p$.
 \end{Theorem}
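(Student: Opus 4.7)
The plan is to deduce Theorem~\ref{intro2} from Theorem~\ref{intro1} by passing to the non-standard model $^*\mathbb{R}$ equipped with the valuation ring $V$ of finite numbers, invoking Theorem~\ref{intro1} there, and then descending back to $\mathbb{R}$. Throughout, I regard $p \in \mathbb{R}^n$ as a point of $^*\mathbb{R}^n$ via the elementary embedding.

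By the definition of an archimedean t-stratification, $(^*S_i)_i$ is an $\mathcal{L}_V$-definable t-stratification of $^*X$, and so Theorem~\ref{intro1} applies: the sets
\[
\mathcal{C}_{p,0} := \mathcal{C}_p({}^*S_0), \qquad \mathcal{C}_{p,i+1} := \mathcal{C}_p({}^*S_0 \cup \dots \cup {}^*S_{i+1}) \setminus \mathcal{C}_p({}^*S_0 \cup \dots \cup {}^*S_i)
\]
form a t-stratification of $\mathcal{C}_p({}^*X)$.

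The crucial step is the bridge identity
\[
\mathcal{C}_p({}^*Y) \;=\; {}^*\bigl(C_p(Y)\bigr) \qquad \text{for every semi-algebraic } Y \subseteq \mathbb{R}^n,
\]
whose right-hand side makes sense because classical tangent cones of semi-algebraic sets are themselves semi-algebraic. Granted this identity and setting $C_{p,0} := C_p(S_0)$ together with $C_{p,i+1} := C_p(S_0 \cup \dots \cup S_{i+1}) \setminus C_p(S_0 \cup \dots \cup S_i)$, the commutation of $^*$ with finite Boolean operations yields $\mathcal{C}_{p,i} = {}^*C_{p,i}$. Consequently $({}^*C_{p,i})_i$ is a t-stratification of $^*C_p(X)$, which by definition means that $(C_{p,i})_i$ is an archimedean t-stratification of $C_p(X)$, as required.

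To establish the bridge identity, I would use the characterisation of $\mathcal{C}_p({}^*Y)$ via $\mathcal{L}_V$-definable differentiable curves supplied by Proposition~\ref{curvedef2}. In one direction, a point $v \in C_p(Y)$ is the limit of directions of a semi-algebraic curve into $Y$ at $p$ (by the curve selection lemma in the semi-algebraic category), and transferring such a curve to $^*\mathbb{R}$ produces an $\mathcal{L}_V$-definable differentiable curve in $^*Y$ whose tangent at $p$ witnesses $v \in \mathcal{C}_p({}^*Y)$. Conversely, starting from an $\mathcal{L}_V$-definable curve in $^*Y$ approaching $p$ in the valuation topology, its image under the standard-part map yields, by the semi-algebraic rigidity of $Y$, a standard curve into $Y$ realising the corresponding direction in $C_p(Y)$.

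The main technical obstacle is precisely this bridge identity: $\mathcal{C}_p$ is defined in the valuation topology on $^*\mathbb{R}^n$ whereas $C_p$ uses the Euclidean topology on $\mathbb{R}^n$, and the comparison must be done carefully so that no directions are created or lost. The curve description of Proposition~\ref{curvedef2} together with the Tarski--Seidenberg transfer for semi-algebraic sets form the backbone of this comparison, and once it is in hand the rest of the argument is a direct application of Theorem~\ref{intro1} to $^*X$.
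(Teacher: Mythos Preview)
Your overall strategy coincides with the paper's: apply Theorem~\ref{intro1} to $({}^*S_i)_i$ and $^*X$, then use the identity $\mathcal C_p({}^*Y)={}^*\bigl(C_p(Y)\bigr)$ for semi-algebraic $Y$ to conclude that ${}^*C_{p,i}=\mathcal C_{p,i}$, so $(C_{p,i})_i$ is an archimedean t-stratification of $C_p(X)$. The paper presents exactly this (Theorem~\ref{maincor}), simply recording the bridge identity as a remark and calling the result an immediate corollary.

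Where you diverge is in the justification of the bridge identity, and there you make it harder than it is and sketch a faulty argument for one direction. The identity is a one-line transfer: the alternative description
\[
C_p(Y)=\{y\ |\ \forall \varepsilon>0\ \exists x\in Y\ \exists a>0\ (\|x-p\|<\varepsilon\ \wedge\ \|a(x-p)-y\|<\varepsilon)\}
\]
is an $\mathcal L_{or}$-formula in the parameters defining $Y$ and in $p$, so by elementary equivalence ${}^*\bigl(C_p(Y)\bigr)$ is the set defined by the same formula in $^*\mathbb R$, namely $\mathcal C_p({}^*Y)$ (using that the norm and valuation topologies on $^*\mathbb R^n$ coincide). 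No curve selection is needed. Your proposed converse direction, taking an $\mathcal L_V$-definable curve in $^*Y$ and applying the standard-part map to obtain ``a standard curve into $Y$,'' does not work as stated: the domain of such a curve lies in $^*\mathbb R$, not $\mathbb R$, and the standard part of its image is just a subset of $\mathbb R^n$, not a parametrised curve; there is no ``semi-algebraic rigidity'' principle that produces a semi-algebraic curve in $Y$ from this data. Replace that paragraph with the transfer argument above and the proof is complete and matches the paper's.
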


 The meaning of inducing an archimedean t-stratification is parallel to the one of inducing a t-stratification. It then also follows that an archimedean t-stratification induces Whitney stratifications on tangent cones. This contrasts with the case of Whitney stratifications as they are known to not be enough to induce Whitney stratifications on tangent cones (see Example~\ref{example}). 

 The paper is organised as follows. In Section 1 we fix the setting for most of the paper, followed by the definition of tangent cones. In the second part of such section we prove the description of $\mathcal C_p(X)$ via definable curves mentioned earlier. Section 2 starts by introducing t-stratifications and related concepts. Then we prove Proposition~\ref{stronger}, which is a strong statement about the relation of sets and their tangent cones. In Subsection 2.3 we prove Theorem~\ref{intro1}. Section 3 contains our results on classical tangent cones. The paper ends with a discussion on generalisations of our main results to a wider family of definable sets. 
 \end{section}
 
\vspace{0.1cm} 
{\bf Acknowledgements.} The present work has been supported by scholarships from CONACYT and DGRI-SEP. It is part of the author's PhD project at the University of Leeds under the supervision of Dr Immanuel Halupczok and Prof Dugald Macpherson, to whom the author is greatly indebted.

\begin{section}{Tangent cones in real closed (valued) fields}
Before discussing and presenting our definition of tangent cones in real closed fields, we discuss the general setting for most of this paper. By $\mathcal L_{or}$ we denote the language of ordered rings, $(+,-,\cdot,0,1,<)$. A real closed field $R$ is naturally seen as an $\mathcal L_{or}$-structure, but when $R$ is non-archimedean, it is also natural to consider $R$ as a valued field and modify the language accordingly. Specifically, if $R$ is non-archimedean and $V$ is a proper convex valuation ring of $R$ (e.g. the set $\{x\in R\ |\ \exists N\in \mathbb N(-N\leq x\leq N)\}$ of finite numbers in $R$), we regard $R$ as an $\mathcal L_V$-structure, where $\mathcal L_V:=\mathcal L_{or}\cup \{V\}$ and the unary predicate $V$ is interpreted in the obvious way. By a definable set of $R$ we mean an $\mathcal L_V$-definable subset $X$ of $R^n$ (for some $n>0$) where we allow arbitrary parameters from $R$. Let us further mention that as an $\mathcal L_V$-structure, $R$ is weakly o-minimal, i.e. any definable subset of $R$ is a finite union of convex definable sets. 

We fix some more notation. By $\Gamma$ and $\overline{R}$ we denote respectively the value group and the residue field of $R$. Then $\Gamma$ is a divisible group (and is isomorphic to $R^{\times}/\mathcal U(V)$, where $\mathcal U(V)$ is the set of units of $V$) and $\overline R$ is real closed. We let $v:R\longrightarrow \Gamma_{\infty}$ denote the valuation map on $R$ and we consider its multi-dimensional version given by $\hat v(a_1,\dots, a_n):=\min\{v(a_1),\dots, v(a_n)\}$ on $R^n$. The residue map is denoted by $\res:R\longrightarrow \overline{R}$. $R$ has a natural definable norm taking values in $R_{\geq 0}$, and we use $\|\cdot\|_{_R}$ to denote it. That $V$, the valuation ring of $R$, is a convex  set in $R$ implies that the topology defined by the valuation (\emph{the valuative topology}) and the one induce by this norm coincide; we use this fact freely.
 
Now we introduce tangent cones in $R$. Following the classical work of H. Whitney~\cite{whitney}, in $\mathbb C^n$ the tangent cone of a set $X$ at a point $p\in \mathbb C^n$ is defined to be the union of all the \emph{limiting secant lines} to $X$ at $p$. This definition makes full sense for subsets of $\mathbb R^n$ but loses the tight relation with the local geometry of the set. This can be seen in the cusp curve, the set defined by $x^3-y^2=0$ in $\mathbb R^2$. At 0, it has only one limiting secant line, the horizontal axis, so this line would be its tangent cone. Clearly the negative part of the axis conveys little information about the set. In order to recover the tighter relation of the tangent cone with the local geometry of the set, it is customary to define the tangent cone as the union of all the \emph{limiting secant rays} to $X$ at $p$. A \emph{ray} is a set of points of the form $tx$, with fixed unitary $x\in R^n$ (the direction of the ray) and $t$ ranging in $[0,\infty)$. The propriety of such definition is exemplified by the applications of these tangent cones, see e.g.~\cite{wilson} and~\cite{fortuna} on matching prescribed tangent cones to algebraic subsets of $\mathbb R^n$. This is the definition of tangent cones that we generalise for subsets of $R^n$. Notice that this definition applies also in the case of archimedean $R$.
 
 \begin{Definition}\label{tcdef}
Let $X\subseteq R^n$ and $p\in R^n$. We define the \textit{tangent cone of $X$ at} $p$ to be the set 
\begin{align*}
\mathcal C_p(X):=\{y\in R^n\ |\ \text{there exist sequences } (x_\mu)\subseteq X \text{ and } (a_\mu)\subseteq R_{>0}, \\ 
\text{ such that } \lim\limits_{\mu\rightarrow \infty}x_\mu=p \text{ \ and } \lim\limits_{\mu\rightarrow \infty}a_\mu(x_\mu-p)=y\}. 
\end{align*}
 \end{Definition}
\noindent The following are immediate properties of tangent cones. 

\begin{Proposition} Let $X,Y\subseteq R^n$ and $p\in R^n$. Then the following holds.
\begin{enumerate}[(i)]
 \item if $X\subseteq Y$, $\mathcal C_p(X)\subseteq \mathcal C_p(Y)$;
 \item  $\mathcal C_p(X\cup Y)=\mathcal C_p(X)\cup \mathcal C_p(Y)$;
 \item $\mathcal C_p(X\cap Y)\subseteq \mathcal C_p(X)\cap \mathcal C_p(Y)$, and the strict relation may hold;
 \item $\mathcal C_p(X)$ is a closed set; 
 \item if $X$ is definable (with parameters), $\mathcal C_p(X)$ is definable (over the same parameters plus $p$).
\end{enumerate}
\end{Proposition}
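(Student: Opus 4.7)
The plan is to verify (i)--(v) in order, treating (i)--(iii) as bookkeeping and (iv)--(v) as the substantive parts.

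Parts (i) and (iii) are immediate from Definition~\ref{tcdef}: any witnessing sequences for $y\in\mathcal C_p(X)$ also lie in $Y$ when $X\subseteq Y$, and (iii) is then (i) applied to both inclusions $X\cap Y\subseteq X$ and $X\cap Y\subseteq Y$. A strict example for (iii) is furnished by the parabola $\{(x,y)\in R^2:y=x^2\}$ and its tangent line $\{y=0\}$ at $p=(0,0)$: their intersection is the single point $p$, so $\mathcal C_p(X\cap Y)=\{0\}$, whereas each of $\mathcal C_p(X)$ and $\mathcal C_p(Y)$ separately equals the $x$-axis. For (ii), the inclusion $\supseteq$ is (i); for $\subseteq$, given witnessing sequences $(x_\mu)\subseteq X\cup Y$ and $(a_\mu)\subseteq R_{>0}$ for $y$, a pigeonhole argument produces an infinite subsequence of $(x_\mu)$ contained entirely in one of $X$, $Y$, which, paired with the corresponding subsequence of $(a_\mu)$, places $y$ in the appropriate summand.

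For (v), I would rewrite Definition~\ref{tcdef} as an $\mathcal L_V$-formula. Convergence $x_\mu\to p$ and $a_\mu(x_\mu-p)\to y$ in the valuative topology is controlled by $\hat v(x_\mu-p)$ and $\hat v(a_\mu(x_\mu-p)-y)$ becoming arbitrarily large in $\Gamma$; since $\Gamma=v(R^\times)$, quantification over $\gamma\in\Gamma$ translates to quantification over $c\in R^\times$. One then checks that $y\in\mathcal C_p(X)$ is equivalent to: for every $c\in R^\times$ there exist $x\in X$ and $a\in R_{>0}$ with $\hat v(x-p)\geq v(c)$ and $\hat v(a(x-p)-y)\geq v(c)$. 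This is an $\mathcal L_V$-formula in $y$ with parameters among those defining $X$ together with $p$, so $\mathcal C_p(X)$ is definable over the required set. Part (iv) then follows: the negation of this formula asserts the existence of some $c$ for which no good $(x,a)$ exists, and the dependence of that condition on $y$ is open in the valuative topology, so the complement of $\mathcal C_p(X)$ is open.

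The principal delicate point is the backward direction of the equivalence in (v): if the first-order condition holds, one must actually produce the sequences demanded by Definition~\ref{tcdef}. In a non-first-countable setting, $\mathbb N$-indexed sequences need not detect all limits, so either ``sequence'' in the definition should be read as indexed by a family cofinal in $\Gamma$, or one must invoke the weak o-minimality of $R$ to extract a definable approximating curve from the uniform existence of $(x,a)$ at every scale $c$. Under either reading, the $\mathcal L_V$-formula described above captures the intended notion of tangent cone, and (iv) and (v) follow together.
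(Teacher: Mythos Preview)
Your argument is correct and matches the paper's approach: the paper gives no proof of (i)--(iv) beyond calling them ``immediate'', and for (v) simply records the alternative first-order description
\[
\mathcal C_p(X)=\{y\in R^n : \forall \varepsilon\in R_{>0}\,\exists x\in X\,\exists a\in R_{>0}\,(\|x-p\|_{_R}<\varepsilon \ \text{and}\ \|a(x-p)-y\|_{_R}<\varepsilon)\},
\]
from which definability is immediate; your valuative reformulation is equivalent since the norm and valuative topologies coincide, and indeed the paper states that valuative version as a remark immediately afterwards. The one point on which you are more careful than the paper is the equivalence between the sequential Definition~\ref{tcdef} and this first-order description: the paper simply asserts it, whereas you correctly flag that $\mathbb N$-indexed sequences need not suffice in a non-first-countable $R$ and that one should read the index set as cofinal in $\Gamma$ (or in $R_{>0}$); that is the intended reading.
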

\noindent Item \textit{(vii)} is an obvious consequence of the following alternative definition of the tangent cone:
  \begin{align*}
  \mathcal C_p(X)=\{y\in R^n\ |\ \forall \varepsilon \in R_{>0}\exists x\in X\exists a\in R_{>0} (\|x-p\|_{_R}<\varepsilon \\ \mbox{ \& } 
  \|a(x-p)-y\|_{_R}<\varepsilon)\}. 
  \end{align*}

\begin{Remark}
We can also give a valuative definition of $\mathcal C_p(X)$.
 \begin{align*}
  \mathcal C_p(X)=\{y\in R^n\ |\ \forall \lambda \in \Gamma \exists x\in X\exists a\in R_{>0} (\hat{v}(x-p)>\lambda \\ \mbox{ \& } 
  \hat{v}(a(x-p)-y)> \lambda\}. 
  \end{align*}
\end{Remark}

\begin{subsection}{Description of $\mathcal C_p(X)$ via curves}\label{curvedefinition}
We aim to present a description of the elements of $\mathcal C_p (X)$ when $X$ is an $\mathcal L_V$-definable subset of $R^n$ using $\mathcal L_V$-definable curves. 

We first discuss the case of the classical tangent cones as motivation. Suppose that $X$ is a semi-algebraic subset of $\mathbb R^n$. The tangent cone of $X$ at $p\in \mathbb R^n$ is defined by simply replacing $R$ with $\mathbb R$ in Definition~\ref{tcdef}. Let us denote this tangent cone by $C_p(X)$. Notice that the valuative definition of tangent cones does not apply to $C_p(X)$. Suppose that $y\in C_0(X)$. Using $\|\cdot\|$ to denote the usual norm on $\mathbb R^n$, $y/\|y\|$ belongs to the topological closure of the semi-algebraic set $U(X):=\{x/\|x\|\ |\ x\in X\}$. By the curve selection lemma for semi-algebraic sets (see Section 2.5 in~\cite{rag}) there exists a semi-algebraic injective curve $\gamma :(0,1)\longrightarrow U(X)$ for which $\gamma(t)\longrightarrow y/\|y\|$ as $t\longrightarrow 0^+$ (moreover, $\gamma$ could be assumed to be differentiable). Take $\eta:(0,1)\longrightarrow X$ to be the definable curve for which $\gamma(t)=\eta(t)/\|\eta(t)\|$ for all $t\in (0,1)$. Note that $y\in C_0(im\, \eta)$. 

The above argument shows that elements of $C_0(X)$  belong to the tangent cone (at 0) of definable (i.e. semi-algebraic) curves in $X$. By definable curve in $X$ we mean a definable injective curve $\eta$ with image contained in $X$. We will frequently refer to the image of such $\eta$ as the curve itself, so the function and its image set will be interchangeable. Let us summarise and enhance the previous facts. 

\begin{Proposition}\label{curvedef1}
 Let $X$ be a semi-algebraic subset of $\mathbb R^n$ and $p\in \mathbb R^n$. Then for $y\in \mathcal \mathcal C_p(X)$, there exists a semi-algebraic differentiable curve $\eta:(0,\varepsilon)\longrightarrow X$ 
  such that $\lim_{t\rightarrow 0^+}\eta(t)=p$ and  
  \[ \lim_{t\rightarrow 0^+}\eta'(t)=\lim_{t\rightarrow 0^+}\tfrac{\eta (t)-p}{t}=y.
  \]
\end{Proposition}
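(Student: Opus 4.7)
The plan is to reduce to $p=0$ by translation, treat the main case $y\neq 0$ via semi-algebraic curve selection, reparametrise by norm so that $\eta(t)/t\to y$, and finally upgrade to $\eta'(t)\to y$ through a semi-algebraic L'H\^opital argument; the case $y=0$ will be handled by an analogous but easier reparametrisation.

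For $y\neq 0$ I would start from sequences $(x_\mu)\subseteq X$ and $(a_\mu)\subseteq \mathbb R_{>0}$ provided by Definition~\ref{tcdef}, with $x_\mu\to 0$ and $a_\mu x_\mu\to y$. Then $a_\mu\|x_\mu\|\to \|y\|>0$, so eventually $x_\mu\neq 0$ and $x_\mu/\|x_\mu\|\to y/\|y\|$. Hence $(0,y/\|y\|)$ lies in the topological closure of the semi-algebraic set
\[
 A := \{(x, x/\|x\|)\mid x\in X\setminus\{0\}\}\ \subseteq\ \mathbb R^n\times S^{n-1}.
\]
The differentiable form of semi-algebraic curve selection (Section~2.5 of~\cite{rag}) then supplies a semi-algebraic $C^1$ curve $(\eta,u):(0,\varepsilon)\to A$ with $(\eta(t),u(t))\to (0,y/\|y\|)$. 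Its first component $\eta$ is a semi-algebraic differentiable curve in $X$ tending to $0$ with direction $\eta(t)/\|\eta(t)\|$ tending to $y/\|y\|$.

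Reparametrising by arc-scale: the norm $f(t):=\|\eta(t)\|$ is continuous, semi-algebraic and positive on $(0,\varepsilon)$ with $f(0^+)=0$. After shrinking $\varepsilon$ the semi-algebraic monotonicity theorem makes $f$ strictly increasing and $C^1$, with differentiable inverse $g$. I would set $\bar\eta(s) := \eta(g(s\|y\|))$ on a suitable interval $(0,\delta)$, giving $\|\bar\eta(s)\|=s\|y\|$ and therefore
\[
 \frac{\bar\eta(s)}{s}\ =\ \|y\|\cdot \frac{\bar\eta(s)}{\|\bar\eta(s)\|}\ \longrightarrow\ y.
\]

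To upgrade this to $\bar\eta'(s)\to y$ I would argue coordinate-wise: each $h:=\bar\eta_i$ is semi-algebraic and $C^1$ on $(0,\delta)$ with $h(0^+)=0$, so by the monotonicity theorem applied to $h'$ the limit $L_i:=\lim_{s\to 0^+}h'(s)$ exists in $\mathbb R\cup\{\pm\infty\}$, and L'H\^opital then forces $L_i=\lim h(s)/s = y_i$. The remaining case $y=0$ just says $0\in\overline X$ and is handled by taking any semi-algebraic differentiable curve in $X$ tending to $0$ and reparametrising via $s\mapsto g(s^2)$, which gives $\|\bar\eta(s)\|=s^2$ and hence $\bar\eta(s)/s\to 0$; the same L'H\^opital step concludes. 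The main obstacle I anticipate is arranging for the curve selection step to act on a single set (here $A$) whose closure records both the base point and the limiting direction simultaneously, so that a single extracted curve carries both pieces of information; everything afterwards reduces to standard o-minimal book-keeping.
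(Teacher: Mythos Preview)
Your proposal is correct and follows essentially the same route as the paper: apply semi-algebraic curve selection to a set encoding the limiting direction (the paper uses $U(X)=\{x/\|x\|:x\in X\}$ and then lifts via definable choice, while you more neatly use $A=\{(x,x/\|x\|)\}$ so the lift comes for free), and then reparametrise. The paper compresses all of your reparametrisation and L'H\^opital work, as well as the case $y=0$, into the single sentence ``we only need to reparametrise the curve $\eta$ described earlier, if needed''; your write-up simply makes those routine o-minimal steps explicit.
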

\begin{proof}
 We only need to reparametrise the curve $\eta$ described earlier, if needed.
\end{proof}

\begin{Remark}\label{remark}
This new description of $C_p(X)$ provides the freedom of choosing suitable sequences in the first definition of the tangent cone: for an element $y\in C_p(X)$, where $X$ is a semi-algebraic subset of $\mathbb R^n$ and $p\in \mathbb R^n$, and a sequence $(r_\mu)\subseteq \mathbb R_{>0}$ converging to 0, we can choose a sequence $(x_\mu)\subseteq X$ converging to $p$ in such a way that, as $\mu\longrightarrow \infty$, 
\[
r_\mu (x_\mu-p)\longrightarrow y.
\]
\end{Remark}

Now we come back to the non-archimedean setting. Our purpose for the rest of the section is to establish the analogous result to Proposition~\ref{curvedef1} for $\mathcal{L}_V$-definable subsets of $R^n$. The way to accomplish this is to prove an adequate curve selection lemma for such subsets. The o-minimality of $R$ as an $\mathcal L_{or}$-structure entails easily such result for $\mathcal L_{or}$-definable sets. In order to obtain the full statement, we exploit the results of L. van den Dries and A. Lewenberg in~\cite{driesII} and~\cite{driesI} on T-convexity. By (2.8) in~\cite{driesI}, $V$ is an $RCF$-convex subring of $R$ (here $RCF$ is the theory of real closed field in the language $\mathcal L_{or}$), this is in turn a consequence of the polynomially boundedness of $RCF$ (see~\cite{driesI} for more details). 

The next lemma is an easy consequence of Corollary 2.8 in~\cite{driesII}.
\begin{Lemma}\label{limitexistence}
Let  $f:R\longrightarrow R^n$ be a bounded $\mathcal{L}_{V}$-definable function and $a\in R$. Then $\lim\limits_{x\rightarrow a^{\scriptscriptstyle +}}f(x)$ exists in $R^n$.
\end{Lemma}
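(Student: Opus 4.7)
The plan is to reduce to the one-dimensional case and then invoke the cited result. Since the valuation ring $V$ is convex in $R$, the valuative topology on $R^n$ agrees with the norm topology coming from $\|\cdot\|_{_R}$, and both are the product topology. Consequently, $\lim_{x \to a^{+}} f(x)$ exists in $R^n$ if and only if $\lim_{x \to a^{+}} f_i(x)$ exists in $R$ for each coordinate $f_i$ of $f = (f_1,\dots,f_n)$. Each $f_i\colon R \to R$ is bounded and $\mathcal{L}_V$-definable because $f$ is, so it is enough to treat the case $n=1$.

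For $n=1$, the statement is a direct application of Corollary~2.8 of~\cite{driesII}: since $V$ is an $RCF$-convex (equivalently $T$-convex) subring of $R$, any bounded $\mathcal{L}_V$-definable unary function on $R$ admits a one-sided limit in $R$ at every $a \in R$. The $T$-convexity of $V$ is exactly the point recalled just before the lemma, following (2.8) of~\cite{driesI} and the polynomial boundedness of $RCF$. Plugging the one-dimensional conclusion into the coordinatewise reduction yields the lemma.

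The main thing to watch is convention matching with the cited corollary: one needs the version which produces a one-sided limit in $R$ itself (rather than merely in some completion or in $\Gamma_\infty$), and this is precisely what $T$-convexity buys, since it forces the Dedekind cut defined by the values $f_i(x)$ as $x \to a^{+}$ to be realised inside $R$. Once that is in hand, no further calculation is needed; the coordinatewise step is routine and the rest is a citation.
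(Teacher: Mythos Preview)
Your coordinatewise reduction to $n=1$ is fine, and the paper proceeds in essentially the same spirit. However, you mischaracterize what Corollary~2.8 of \cite{driesII} actually provides. It does \emph{not} directly assert the existence of one-sided limits for bounded $\mathcal L_V$-definable unary functions. Rather, it yields a finite partition of $R$ into $\mathcal L_V$-definable convex pieces $C_i$ such that on each $C_i$ the function $f$ agrees with some $\mathcal L_{or}$-definable function $f_i$. The paper then invokes o-minimality of $R$ as an $\mathcal L_{or}$-structure on the piece immediately to the right of $a$: the one-sided limit $\lim_{x\to a^+} f_i(x)$ exists in $(R\cup\{\pm\infty\})^n$ by the monotonicity theorem, and boundedness of $f$ forces it to lie in $R^n$.

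Your heuristic that ``$T$-convexity forces the Dedekind cut defined by the values $f_i(x)$ to be realised inside $R$'' is not the correct mechanism: $R$ is not Dedekind complete, and $T$-convexity does not fill such cuts. What $T$-convexity (via the cited corollary) buys is precisely the piecewise $\mathcal L_{or}$-definability; the limit then exists because one-sided limits of definable unary functions always exist in an o-minimal structure. So the missing step in your argument is exactly the one the paper spells out.

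One further point worth noting: the paper's proof records that on each convex piece the restriction may be taken continuous and strictly monotonic. This is not needed for the lemma itself, but it is explicitly invoked later (in the proof of Proposition~\ref{plainselection} and again in the proof of Theorem~\ref{main}). A bare citation, even correctly interpreted, would not deliver that extra information.
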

\begin{proof}
For a function as in the hypotheses, there exists a finite partition $(C_i)_{i\leq m}$ of $R$ such that:
\begin{itemize}
 \setlength\itemsep{-0.5ex}
 \item every $C_i$ is an $\mathcal{L}_V$-definable convex subset;
 \item for any $i\leq m$, there exists an $\mathcal L_{or}$-definable function $f_i:R\longrightarrow R^n$ for which $f_i|_{C_i}=f|_{C_i}$.
\end{itemize}
Using the o-minimality of $R$ in the language $\mathcal L_{or}$ we can assume that $f_i|_{C_i}$ is continuous (and strictly monotonic, as required in the proof of the next proposition). Thus, the limit of $f$ when $x\rightarrow a^{\scriptscriptstyle +}$ is the limit of $f_i|_{C_i}$ when $x\rightarrow a^{\scriptscriptstyle +}$, which exists in $(R\cup {\scriptstyle \{\pm \infty\}})^n$. Since $f$ is bounded, we can ensure that  $\lim\limits_{x\rightarrow a^{\scriptscriptstyle +}}f(x)\in R^n$.
\end{proof}

\begin{Proposition}\textup{(}\textit{Curve selection lemma for $(R,V)$}\textup{).}\label{plainselection}
 Assume $X$ is an $\mathcal{L}_V$-definable subset of $R^n$ and $x$ is an element of the topological closure of $X$. Then there exists $\varepsilon \in R_{>0}$ and an $\mathcal{L}_V$-definable curve $\gamma :(0,\varepsilon)\longrightarrow X$ such that $\lim_{t\rightarrow 0^+}\gamma(t)=x.$
\end{Proposition}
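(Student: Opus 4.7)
After translating we may assume $x=0$. My plan is to avoid induction on $n$ by parametrising $X$ via the norm: any $\mathcal{L}_V$-definable curve $\eta(t)$ in $X$ satisfying $\|\eta(t)\|_{_R}=t$ is automatically injective, bounded on any small interval, and has norm tending to $0$, so by Lemma~\ref{limitexistence} (applied to $\eta$ on a subinterval where it is continuous, as in the lemma's proof) its limit as $t\to 0^+$ exists and, by continuity of the norm, must equal $0$. This produces the desired curve in one stroke.

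To set this up, I consider the $\mathcal{L}_V$-definable image $N(X):=\{\|y\|_{_R}:y\in X\}\subseteq R_{\geq 0}$, which has $0$ in its closure because $0\in\overline{X}$. By weak o-minimality of $R$ in $\mathcal{L}_V$, $N(X)$ is a finite union of convex $\mathcal{L}_V$-definable sets, one of which, say $D$, has $0$ in its closure. A short convexity argument then supplies $\delta\in R_{>0}$ with $(0,\delta)\subseteq D$: for any $t\in D$ and $s\in(0,t)$, accumulation of $D$ at $0$ provides some $u\in D$ with $u<s$, and convexity places $s$ in the interval $(u,t)\subseteq D$. Hence for each $t\in(0,\delta)$ the fibre $X\cap N^{-1}(t)$ is nonempty.

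It remains to pick, uniformly and $\mathcal{L}_V$-definably in $t\in(0,\delta)$, an element $\eta(t)\in X\cap N^{-1}(t)$. This is the main obstacle: weakly o-minimal structures need not admit definable Skolem functions, so the existence of such a selector is not automatic. I would handle it through the T-convexity framework of van den Dries--Lewenberg (\cite{driesI},~\cite{driesII}), which describes $\mathcal{L}_V$-definable sets and functions as piecewise $\mathcal{L}_{or}$-definable on a finite partition into convex pieces; reducing to an $\mathcal{L}_{or}$-definable piece of the relevant fibred set permits an o-minimal Skolemisation on that piece, producing the required $\mathcal{L}_V$-definable selector $\eta$ after gluing. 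Once $\eta$ is in hand the first paragraph concludes: shrink $\delta$ if necessary to a subinterval on which $\eta$ is continuous, and the resulting map $\eta:(0,\varepsilon)\to X$ is an $\mathcal{L}_V$-definable injective curve with $\lim_{t\to 0^+}\eta(t)=0=x$.
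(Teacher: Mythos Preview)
Your approach is correct and structurally close to the paper's: both proofs reduce to producing an $\mathcal{L}_V$-definable selector over a small interval, and both lean on the T-convex results of van den Dries--Lewenberg for that step. The paper works instead with the set $A=\{(t,y)\in R_{>0}\times X:\|x-y\|_{_R}<t\}$ and simply invokes Remark~2.7 of~\cite{driesII}---that $RCF_{convex,c}$ (for $c$ a fixed non-zero element of positive valuation) has definable Skolem functions---to extract a definable $f:R_{>0}\to X$ with $(t,f(t))\in A$; injectivity and continuity on a subinterval then come from the decomposition in the proof of Lemma~\ref{limitexistence}. Your norm-parametrisation $\|\eta(t)\|_{_R}=t$ is a pleasant variant that delivers injectivity and the correct limit for free. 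The one place where your write-up is looser is the Skolem step itself: the piecewise-$\mathcal{L}_{or}$ description you allude to (as in Lemma~\ref{limitexistence}) is stated for \emph{functions}, not for arbitrary definable relations, so applying it directly to the fibred set requires either the analogous structure theorem for sets or---more simply---the direct citation of the Skolem-function result, which is exactly what the paper does.
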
 
\begin{proof}
 Let $c$ be a non-zero element of $R$ of positive valuation. By remark 2.7 in \cite{driesII}, the theory $RCF_{convex,c}$ has definable skolem functions. Using the proof of the previous lemma the result follows as usual: consider the set 
 \[
 A:=\{(t,y)\in R_{>0}\times X\ |\ \|x-y\|_{_R}<t\}.
 \] 
 Then $A$ is a definable subset of $R^{1+n}$. Since $x$ is an element of the topological closure of $X$, for every $t\in R_{>0}$ there exists $y\in X$ such that $(t,y)\in A$. Hence there exists a definable function $f:R_{>0}\longrightarrow X$ such that for each $t\in R_{>0}$, $(t,f(t))\in A$. From the proof of the previous lemma we deduce that there is $\varepsilon \in R_{>0}$ such that $f$ is continuos and injective on $(0,\varepsilon)$. Set $\gamma$ to be $f|_{(0,\varepsilon)}$. Notice that in this proof we may have added $c$ as a parameter in the definability of $\gamma$, but this does not matter much in applications.
\end{proof}

\noindent We are now prepared to obtain a version of the desired description of elements of $\mathcal C_p (X)$.

\begin{Corollary}\label{simplify}
 Let $X\subseteq R^n$ be $\mathcal L_V$-definable and fix $p\in  R^n$. If $y\in \mathcal C_p (X)$, then there exists an  $\mathcal{L}_V$-definable curve $Y\subseteq X$ such that $y\in \mathcal C_p (Y)$. 
\end{Corollary}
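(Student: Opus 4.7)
The plan is to imitate the classical proof via curve selection, but packaged through the $\mathcal{L}_V$-definable skolem functions that come with T-convexity and were already used in the proof of Proposition~\ref{plainselection}. The idea is to encode both the approach $x_\mu\to p$ and the rescaled convergence $a_\mu(x_\mu-p)\to y$ into a single definable family parametrised by $t\in R_{>0}$, then skolemise and trim.

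Concretely, I would consider the $\mathcal{L}_V$-definable set
\[
A:=\{(t,x,a)\in R_{>0}\times X\times R_{>0}\ |\ \|x-p\|_{_R}<t \text{ and }\|a(x-p)-y\|_{_R}<t\}.
\]
Because $y\in \mathcal{C}_p(X)$, the first definition of the tangent cone together with the norm/valuative topology equivalence gives, for every $t\in R_{>0}$, some pair $(x,a)$ with $(t,x,a)\in A$. Using definable skolem functions in $RCF_{convex,c}$ (as invoked in the proof of Proposition~\ref{plainselection}), I pick an $\mathcal{L}_V$-definable function $t\mapsto (x(t),a(t))$ with $(t,x(t),a(t))\in A$ for all $t\in R_{>0}$.

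Now I would mimic the trimming step from the proof of Proposition~\ref{plainselection}: the function $x:R_{>0}\to X$ is built by gluing finitely many $\mathcal{L}_{or}$-definable pieces on a convex partition, so by o-minimality of $R$ in $\mathcal{L}_{or}$, I can pass to some $\varepsilon\in R_{>0}$ on which $x|_{(0,\varepsilon)}$ is continuous and strictly monotone (hence injective) in each coordinate, or failing that, genuinely injective as a map into $R^n$. Setting $\gamma:=x|_{(0,\varepsilon)}$ and $Y:=\gamma((0,\varepsilon))\subseteq X$ produces an $\mathcal{L}_V$-definable curve.

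Finally I would verify that $y\in\mathcal{C}_p(Y)$ via the valuative definition in the Remark: given $\lambda\in\Gamma$, choose $t\in R_{>0}$ with $v(t)>\lambda$ and $t<\varepsilon$. Then $\gamma(t)\in Y$, and the defining inequalities of $A$ give $\hat v(\gamma(t)-p)>\lambda$ and $\hat v(a(t)(\gamma(t)-p)-y)>\lambda$, which is exactly what is required. The only mildly delicate step I anticipate is arranging injectivity of $\gamma$ while retaining the scaling witness $a(t)$; this is essentially a bookkeeping matter since $a$ is defined on the same domain as $x$, so restricting $t$ affects neither defining inequality. An additional minor point is the parameter $c$ entering through skolemisation, but as noted after Proposition~\ref{plainselection}, this is harmless for the intended applications.
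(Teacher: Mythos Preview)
Your proof is correct and carries out precisely the direct skolemisation-and-trim approach that the paper itself acknowledges would work (``We could simply follow the proof we gave in the case of $X\subseteq \mathbb R^n$ being a semi-algebraic set''). The paper, however, chooses a genuinely different route: after reducing to $p=0$ it introduces the auxiliary set $D(X):=\{(x,r)\in R^n\times R_{>0}\mid rx\in X\}$, observes the structural identity $\overline{D(X)}\cap(R^n\times\{0\})=\mathcal C_0(X)\times\{0\}$, applies Proposition~\ref{plainselection} as a black box to the point $(y,0)\in\overline{D(X)}$ to obtain a curve $\gamma=(\gamma_1,\gamma_2)$ in $D(X)$, and then takes the product $\gamma_1\cdot\gamma_2$ as the desired curve in $X$. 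Your argument is more self-contained, re-running the skolemisation machinery by hand on a set encoding both the approach and the rescaling simultaneously; the paper's argument is modular (curve selection is invoked once, cleanly) and has the side benefit of exhibiting the tangent cone as a boundary slice of $\overline{D(X)}$, a description of independent interest. One small technical remark on your version: the trimming to injectivity tacitly uses $y\neq 0$, since if every coordinate of $x(t)$ were constant on an initial segment then $x(t)\equiv p$ and hence $y=0$; but the case $y=0$ follows immediately from Proposition~\ref{plainselection} applied to $p\in\overline{X}$, so this is harmless.
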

\begin{proof}
We could simply follow the proof we gave in the case of $X\subseteq \mathbb R^n$ being a semi-algebraic set. For the sake of showing an interesting possible treatment of the tangent cone, we present a different argument. As usual, we assume $p=0$. The set 
\[
D(X):=\{(x,r)\in R^n\times R_{>0}\ |\  rx\in X\}
\] 
is a definable set and it reproduces $\mathcal C_0(X)$ in the following sense,

\[
\overline{D(X)}\cap (R^n\times \{0\})=\mathcal C_0(X)\times \{0\},
\]
 where $\overline{D(X)}$ denotes the topological closure of $D(X)$.
If $y\in \mathcal C_0(X)$, then $(y,0)\in \overline{D(X)}$ and by Proposition~\ref{plainselection} there is a definable curve $\gamma :(0,1)\longrightarrow D(X)$ for which $\lim_{t\rightarrow 0^+}\gamma (t)=(y,0)$. Let $\gamma_1:(0,1)\longrightarrow R^n$ and $\gamma_2:(0,1)\longrightarrow R_{>0}$ be the definable curves that satisfy $\gamma (t)=(\gamma_1(t),\gamma_2(t))$ for all $t\in (0,1)$. Then the product $\gamma_1\cdot \gamma_2$ is an $\mathcal L_V$-definable curve in $X$ and $y\in \mathcal C_0(\gamma_1\cdot \gamma_2)$. 
\end{proof}

 As immediate consequence of the result above, we present now the desired description of $\mathcal C_p (X)$.
\begin{Proposition}\label{curvedef2}
Let $X\subseteq R^n$ be $\mathcal L_{V}$-definable and fix $p\in R^n$, then for $y\in \mathcal \mathcal C_p(X)$, there exists an $\mathcal L_V$-definable differentiable curve $\eta:(0,\varepsilon)\longrightarrow X$ 
  such that $\lim_{t\rightarrow 0^+}\eta(t)=p$ and  
  \[ 
  \lim_{t\rightarrow 0^+}\eta'(t)=\lim_{t\rightarrow 0^+}\tfrac{\eta (t)-p}{t}=y.
  \]
\end{Proposition}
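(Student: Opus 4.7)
The plan is to invoke Corollary~\ref{simplify} to reduce to the case where $X$ is itself the image of an $\mathcal L_V$-definable injective curve $\gamma:(0,1)\to R^n$. After translation we may take $p=0$; the degenerate case $y=0$ is handled by an analogous but slower reparametrisation, so we focus on $y\neq 0$. Following the piecewise-to-$\mathcal L_{or}$ strategy already used in the proof of Lemma~\ref{limitexistence}, after shrinking to some subinterval $(0,\delta)$ we may assume $\gamma$ agrees with an $\mathcal L_{or}$-definable map. The o-minimality of $R$ in $\mathcal L_{or}$ then ensures, after a further shrinking, that each component of $\gamma$, the norm $\|\gamma(t)\|_{_R}$, and the direction $u(t):=\gamma(t)/\|\gamma(t)\|_{_R}$ are continuous and monotonic; combined with the fact that $y\in\mathcal C_0(\gamma)$ forces some $\gamma$-values to accumulate at $0$, this upgrades to $\lim_{t\to 0^+}\gamma(t)=0$.

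Next I would build the right parametrisation. Set $\lambda(t):=\|\gamma(t)\|_{_R}/\|y\|_{_R}$; this is $\mathcal L_V$-definable, continuous, strictly monotonic and tends to $0$, hence admits a definable inverse $\lambda^{-1}:(0,\varepsilon)\to(0,\delta)$ for some $\varepsilon\in R_{>0}$. Put $\eta(t):=\gamma(\lambda^{-1}(t))$. By construction $\|\eta(t)\|_{_R}=t\|y\|_{_R}$, so $\eta(t)/t=\|y\|_{_R}\cdot u(\lambda^{-1}(t))$. Since $u$ is $\mathcal L_V$-definable and bounded (values on the unit sphere), Lemma~\ref{limitexistence} guarantees that $\lim_{t\to 0^+}u(t)$ exists; and sequences witnessing $y\in\mathcal C_0(\gamma)$, decomposed into norms and directions, identify this limit as $y/\|y\|_{_R}$. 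Hence $\lim_{t\to 0^+}\eta(t)/t=y$.

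Finally I would address $\lim_{t\to 0^+}\eta'(t)=y$. Shrinking $\varepsilon$ once more and reusing the $\mathcal L_{or}$-reduction, each component $\eta_j$ of $\eta$ becomes $C^1$ and monotonic on $(0,\varepsilon)$, so $\eta_j'$ has a (a priori possibly infinite) one-sided limit at $0^+$. Extending $\eta_j$ continuously to $0$ via $\eta_j(0):=0$ and applying the mean value theorem (valid for definable $C^1$ functions in o-minimal expansions of a real closed field) yields, for each $t\in(0,\varepsilon)$, a point $c_t\in(0,t)$ with $\eta_j(t)/t=\eta_j'(c_t)$; since $c_t\to 0^+$ and the left-hand side converges to $y_j$, the limit of $\eta_j'$ at $0^+$ must equal $y_j$. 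This last step is the one I expect to be the most delicate: it relies on having differentiability and the mean value theorem available in the $\mathcal L_V$ setting, which is precisely what the passage to $\mathcal L_{or}$-definable pieces along the lines of Lemma~\ref{limitexistence} affords.
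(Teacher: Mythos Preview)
Your proposal is correct and follows exactly the route the paper intends: invoke Corollary~\ref{simplify} to reduce to a single $\mathcal L_V$-definable curve in $X$, then reparametrise. The paper itself gives no further argument at this point---it presents Proposition~\ref{curvedef2} as an ``immediate consequence'' of Corollary~\ref{simplify}, just as Proposition~\ref{curvedef1} is proved by the single phrase ``reparametrise the curve $\eta$ described earlier, if needed''---so you have in effect written out the reparametrisation and mean-value-theorem details that the paper leaves implicit. One small point worth tightening: from $y\in\mathcal C_0(\operatorname{im}\gamma)$ you only get that $\operatorname{im}\gamma$ accumulates at $0$, not that the accumulation occurs at the endpoint $t\to 0^+$; a preliminary flip $t\mapsto 1-t$ (or restriction around the relevant endpoint), justified by the piecewise monotonicity you already invoke, handles this before you shrink to $(0,\delta)$.
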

\end{subsection}
\end{section}

\begin{section}{Stratifications of tangent cones}
We now introduce the second topic of this paper, stratifications in the setting of $R$. After defining the maps so-called risometries, we prove a strong result about the relation between $\mathcal L_V$-definable sets of $R$ and their tangent cones (Proposition~\ref{stronger}), this is a prompt application of Proposition~\ref{curvedef2}. In the last subsection we prove that t-stratifications induce t-stratifications on tangent cones. 

\begin{subsection}{Risometries, translatability and t-stratifications}\label{t-strats}
We first introduce some model-theoretic concepts needed below. We define $\text{RV}^{(n)}$ as the quotient of $R^n$ by the equivalence relation given by $\hat{v}(x-y)>\hat v(x)$ or $x=y=0$, for $x,y\in R^n$. The natural map is denoted by $\hat{rv}:R^n\longrightarrow \text{RV}^{(n)}$. $\text{RV}^{\text{eq}}$ is the set of all imaginaries defined from $\text{RV}^{(n)}$ ($n>0$), see  Definition 2.16 in~\cite{immi} for more details. 

By \emph{valuative open ball} we mean a set of the form $\{x\in R^n\ |\ \hat{v}(x-a)>\lambda\}$ where $a$ and $\lambda$ are fixed elements of $R^n$ and $\Gamma$, respectively; such set is denoted by $B(a,>\lambda)$. If the inequality is relaxed to $\geq$, then the set is called a \emph{valuative closed ball} and is denoted by $B(a, \geq\gamma)$. We will frequently drop the adjective ``valuative'' when no confusion is possible. By saying ``let $B$ be a ball'' we mean $B$ is a valuative ball open or closed.   
\begin{Definition}
Let $B$ be a ball in $R^n$. A \emph{risometry} on $B$ is a bijection $\varphi:B\longrightarrow B$ for which $\hat{rv}(\varphi (x)-\varphi (y))=\hat{rv}(x-y)$ holds for any $x,y\in B$. 
\end{Definition}

A risometry $\varphi$ is in particular an isometry, i.e. $\hat v(\varphi (x)-\varphi(y))=\hat v(x-y)$ holds for any $x,y$ in the domain of $\varphi$.

Let $B\subseteq R^n$ be a ball and $\chi:B\longrightarrow \text{RV}^{\text{eq}}$ a definable map. Definition 3.1 in~\cite{immi} provides us with a notion of \emph{translatability} of $\chi$ on $B$. The map $\chi$ being $d$-translatable on $B$ roughly means that it is almost invariant under translation by $\tilde V$, where $\tilde V$ is a $d$-dimensional subspace of $R^n$. Definable risometries are used to make precise this almost invariability. This version of translatability will not be used in this work, we use an equivalent version given by Lemma 3.7 in~\cite{immi}.

For $B$ a ball we put $B-B:=\{x-y\ |\ x,y\in B\}$. If $W$ is a subspace of dimension $d$ of $\overline{R}^n$, we say that the coordinate projection $\pi:R^n\longrightarrow R^d$ is an \emph{exhibition} of $W$ if, composed with the residue map, it induces an isomorphism between $W$ and $\overline{R}^d$. The projection at the level of the residue field induced by $\pi$ is denoted by $\overline{\pi}:\overline{R}^n\longrightarrow \overline{R}^d$. The map $\dir:R^n\setminus \{0\}\longrightarrow \mathbb G(\overline{R}^n)$ takes any $x$ to the subspace $\res(R\cdot x)$ of $\overline{R}^n$. We usually treat $\dir(x)$ as a representative of $\res(R\cdot x)$.

\begin{Definition}\label{translaters}
Let $B$, $\chi$ and $W$ be as above and $\pi:R^n\longrightarrow R^d$ be an exhibition of $W$. We say that $\chi$ is $W$-\emph{translatable} on $B$ (with respect to $\pi$) if there exists a definable family of risometries $(\alpha_x:B\longrightarrow B)_{x\in \pi(B-B)}$ with the following properties, for all $x,x'\in \pi(B-B)$ and $z\in B$,
\begin{enumerate}
\setlength\itemsep{0.2ex}
 \item $\chi\circ \alpha_x=\chi$;
 \item $\alpha_x\circ \alpha_{x'}=\alpha_{x+x'}$;
 \item $\pi(\alpha_x(z)-z)=x$;
 \item $\dir(\alpha_x(z)-z)\in W$.
\end{enumerate}
For $d\leq n$, we say that $\chi$ is $d$-\emph{translatable} on $B$ if there exists a $d$-dimensional subspace $W\subseteq \overline{R}^n$ and an exhibition $\pi$ of $W$ such that $\chi$ is $W$-translatable on $B$ (with respect to $\pi$). 
\end{Definition}

The family $(\alpha_x:B\longrightarrow B)_{x\in \pi(B-B)}$ above is called a \emph{translater} of $\chi$ on $B$ witnessing $W$-translatability.

It is worth mentioning that translatability does not depend on the choice of the projection $\pi$ exhibiting $W$, i.e. once we now $\chi$ is $W$-translatable on the ball $B$ (with respect to one particular exhibition of $W$), we can then find a family of risometries as in the definition for any given exhibition of $W$.  

For $A\subseteq R^n$, $\chi_A:R^n\longrightarrow \text{RV}^{\text{eq}}$ is the function given by $\chi_A(x)=0$ if $x\in A$ and $\chi(x)=1$ otherwise. For a tuple $(A_0,\dots, A_l, \rho_{l+1},\dots, \rho_m)$ of subsets and maps into $\text{RV}^{\text{eq}}$ we put them into a sigle map $\chi :=(\chi_{A_0},\dots ,\chi_{A_l}, \rho_{l+1},\dots, \rho_m)$. Notice that if all the subsets $A_i$ and maps $\rho_i$ are definable, so is $\chi$. Thus we can apply the previous definitions to definable subsets and tuples of definable subsets and maps using this trick. 

Now we can define t-stratifications. By a definable partition of a set we mean that all the sets conforming the partition are definable.  

\begin{Definition}
 Let $B_0\subseteq R^n$ be a ball or $R^n$. A (finite) definable partition $(S_i)_i=(S_0,\dots, S_n)$ of $B_0$ is said to be a \emph{t-stratification} of $B_0$ if the following holds:
 \begin{enumerate}
  \item For all $d\leq n$, $\dim(\bigcup_{j\leq d}S_j)\leq d$;
  \item For each $d\leq n$ and each ball $B\subseteq S_d\cup \dots \cup S_n$,  $(S_i)_i$ is $d$-translatable on $B$. 
  
 If $\chi:B_0\longrightarrow  \text{RV}^{\text{eq}}$ is a definable map, we say that the t-stratification $(S_i)_i$ of $B_0$ \emph{reflects} $\chi$ if the following strengthening of 2. holds.
 \item[2'.] For each $d\leq n$ and each ball $B\subseteq S_d\cup \dots \cup S_n$, $((S_i)_i,\chi)$ is $d$-translatable on $B$. 
 \end{enumerate}
\end{Definition}

\noindent By saying that $(S_i)_i$ is a t-stratification of the subset $A\subseteq B_0$ we mean that $(S_i)_i$ reflects the map $\chi_A$ as defined before (and this applies to tuples of subsets and maps as well). 

It is worth mentioning that t-stratifications can be defined in an even more general setting, that is, the one of valued fields of equi-characteristic 0 (notice that $R$ and $\overline R$ being real closed fields ensure such condition in our setting). Accordingly, from the point of view of their existence, it was proved by I. Halupczok in~\cite{immi} that given a formula $\phi(x,y)$ that defines a map into $\text{RV}^{\text{eq}}$, there exist a finite family of formulas $(\psi_i(z))_i$ such that in any suitable Henselian valued field $K$ of equi-characteristic 0, $(\psi_i(K^n))_i$ is a t-stratification of $K^n$ reflecting the map defined by $\phi(x,y)$ on $K^n$. This basically means that given any finite family of definable maps and sets, one can always obtain a t-stratification reflecting such family and defined uniformally on all suitable fields $K$. The suitable valued fields in focus are the models of a theory $\mathcal T$ satisfying all the requirements in Section 2.5 of~\cite{immi}. In particular, $\mathcal T$ contains the theory of all Henselian valued fields of equi-characteristic 0 in the language $\mathcal L_{\text{Hen}}$ described in Definition 2.16 in~\cite{immi}. An example of such $\mathcal T$ is given in our setting. This is deduced from the following theorem and Example 5.1 in~\cite{immi}, since the latter states that in our setting Hypothesis 2.21 in~\cite{immi} is satisfied.

 \begin{Theorem}[I. Halupczok~\cite{immi}]\label{generalexist}
Let $\mathcal L$ be a language containing $(\mathcal L_V)_{\text{Hen}}$ and $\mathcal T$ be an $\mathcal L$-theory containing the theory of a real closed field in the language $(\mathcal L_V)_{\text{Hen}}$ and satisfying Hypothesis 2.21 in~\cite{immi}. For any $\mathcal L$-formula $\phi(x)$ such that $\phi(R^n)$ defines a map into $\text{RV}^{\text{eq}}$ for any $R\vDash \mathcal T$, there are $\mathcal L$-formulas $(\psi_i(x))_{i\leq n}$ such that for any $R\vDash \mathcal T$, $(\psi_i(R^n))_i$ is a t-stratification reflecting $\phi(R^n)$. 
 \end{Theorem}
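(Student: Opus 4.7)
The plan is to reduce the statement to a direct application of the general existence theorem for t-stratifications proved in~\cite{immi}, rather than attempting to reprove that theorem from scratch here. The only thing I would need to verify in the present setting is that $\mathcal T$ satisfies the model-theoretic conditions required by the machinery of~\cite{immi}, namely Hypothesis 2.21 there. This is precisely what Example 5.1 of~\cite{immi} does: it shows that the theory of a real closed field equipped with a proper convex valuation ring, read in the language $(\mathcal L_V)_{\text{Hen}}$, fits into that framework. Since $\mathcal L$ extends $(\mathcal L_V)_{\text{Hen}}$ and $\mathcal T$ extends this base theory, Hypothesis 2.21 is inherited, and the general existence theorem produces the formulas $(\psi_i(x))_{i\le n}$ with the stated property in every $R\vDash\mathcal T$.

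For orientation it is worth sketching the strategy that underlies the general existence theorem of~\cite{immi}, since it makes clear which role the hypothesis plays. The proof there goes by induction on the ambient dimension~$n$. At each stage one exploits the dimension theory coming from the RV-sorted structure to locate where the low-dimensional strata $S_0,\dots,S_d$ must live, and then, on each ball $B$ disjoint from these low strata, one constructs a definable translater witnessing $d$-translatability of the map $(\chi_{S_i})_i$ together with $\phi$. The translater is assembled via cell decomposition in the RV-sorted language together with Hensel's lemma in equi-characteristic~$0$, and uniform definability across all models of $\mathcal T$ is guaranteed by carrying out the construction inside the $\mathcal L_{\text{Hen}}$-based framework in which Hypothesis 2.21 is formulated.

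The main obstacle, were one to attempt a self-contained proof adapted to our specific setting rather than invoking~\cite{immi}, would be this uniform construction of translaters: reflecting $\phi$ means the family $(\alpha_x)_{x\in\pi(B-B)}$ must simultaneously preserve the fibres of $\phi$ and satisfy the four algebraic compatibilities of Definition~\ref{translaters}, and producing such families uniformly in $R$ and in the defining parameters of $\phi$ is the technically delicate step. Invoking the general theorem of~\cite{immi} bypasses this by design, so that our task here collapses to the verification encapsulated in Example 5.1 of~\cite{immi}, and the existence conclusion follows.
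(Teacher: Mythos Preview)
Your proposal is essentially correct in its conclusion: this theorem is quoted from~\cite{immi} and the paper provides no proof of it whatsoever---it is simply stated with attribution. So there is nothing to compare your argument against.

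That said, there is a small logical slip in your first paragraph. You write that Example~5.1 of~\cite{immi} is needed to verify Hypothesis~2.21 for $\mathcal T$, and that the hypothesis is then ``inherited'' by the extension. But look again at the theorem statement: satisfying Hypothesis~2.21 is an \emph{explicit assumption} on $\mathcal T$. Nothing has to be checked or inherited; it is given. The role of Example~5.1 in the paper is different---it appears in the paragraph \emph{before} the theorem, where the author explains that the specific theory of real closed valued fields in $(\mathcal L_V)_{\text{Hen}}$ is an instance of such a $\mathcal T$. That remark is about applying the theorem, not about proving it. Your sketch of the inductive strategy from~\cite{immi} is fine as background commentary, but strictly speaking none of it is required here.
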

 
 For most of the paper we are only interested in $\mathcal L=(\mathcal L_V)_{\text{Hen}}$ and $\mathcal T=$ the theory of a real closed field in the language $(\mathcal L_V)_{\text{Hen}}$, which correspond to our setting so far. Only in Section~\ref{comments} we discuss further possibilities of our results in more general $\mathcal L$ and $\mathcal T$. 

\end{subsection}
 
 \begin{subsection}{Risometries between tangent cones}\label{riso}
 During the whole of this subsection, $B$ will denote the ball $B(0,>0):=\{x\in R^n\ |\ \hat v(x)>0\}$. The following theorem reflects a strong relation between sets and their tangent cones. The overline notation $\overline{X}$ denotes the topological closure of the set $X$.

\begin{Proposition}\label{stronger} Let $X,Y\subseteq R^n$ be $\mathcal{L}_V$-definable and fix $p\in \overline{X}\cap \overline{Y}$. If there exists an $\mathcal{L}_V$-definable risometry $\varphi:B\longrightarrow B$ taking $X\cap B$ to $Y\cap B$ and fixing $p$, then there exists an $\mathcal{L}_V$-definable risometry $\psi$ on $B$ taking $\mathcal C_p(X)\cap B$ to $\mathcal  C_p (Y)\cap B$ that fixes $0$.
\end{Proposition}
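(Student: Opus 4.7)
After translating so that $p=0$ (valid because $B-p=B$ for $p\in B$ in the ultrametric topology, and $\mathcal C_p(X)=\mathcal C_0(X-p)$), the hypothesis reduces to: $\varphi:B\to B$ is a definable risometry with $\varphi(0)=0$ and $\varphi(X\cap B)=Y\cap B$. My proposal is to take
\[
\psi(y):=\lim_{r\to 0^+}\tfrac{\varphi(ry)}{r},\qquad y\in B,
\]
as the desired risometry. Existence of this limit follows by applying the risometry condition of $\varphi$ to the pair $(ry,0)$: one has $\hat v(\varphi(ry)-ry)>\hat v(ry)$, so $\varphi(ry)/r\in B(y,>\hat v(y))\subseteq B$ for every sufficiently small $r\in R_{>0}$. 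Hence $r\mapsto\varphi(ry)/r$ is a bounded $\mathcal L_V$-definable function on some interval $(0,\varepsilon)$, and Lemma~\ref{limitexistence} delivers the limit in $R^n$. Definability of $\psi$ as a function of $y$ is obtained by expressing the limit condition as a first-order formula in $(y,L)$ using the predicate $V$.

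Next I would verify that $\psi:B\to B$ is a bijective risometry fixing $0$. The inclusion $\psi(y)\in B(y,>\hat v(y))$ gives $\psi(B)\subseteq B$ and $\psi(0)=0$. Applying the risometry condition of $\varphi$ to the pair $(ry,ry')$ yields $(\varphi(ry)-\varphi(ry'))/r\in B(y-y',>\hat v(y-y'))$ for every small $r$; since open balls in the valuative topology are clopen, this containment persists in the limit, yielding $\hat{rv}(\psi(y)-\psi(y'))=\hat{rv}(y-y')$. For bijectivity I apply the same construction to $\varphi^{-1}$ to produce $\tilde\psi(y):=\lim_{r\to 0^+}\varphi^{-1}(ry)/r$, and check $\psi\circ\tilde\psi=\tilde\psi\circ\psi=\mathrm{id}_B$; this reduces via the isometry of $\varphi$ to the observation that the defining convergence of $\tilde\psi(y)$ forces $\hat v(\varphi^{-1}(ry)-r\tilde\psi(y))-v(r)\to\infty$, whence $\varphi(r\tilde\psi(y))/r\to y$.

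Finally, I would show $\psi$ carries $\mathcal C_0(X)\cap B$ into $\mathcal C_0(Y)\cap B$; the reverse inclusion is symmetric, using $\tilde\psi$. Given $y\in\mathcal C_0(X)\cap B$, Proposition~\ref{curvedef2} produces a definable differentiable curve $\eta:(0,\varepsilon)\to X$ with $\eta(t)\to 0$ and $\eta(t)/t\to y$. For $t$ small, $\eta(t)\in B$, so $\varphi\circ\eta$ is a definable curve in $Y\cap B$ converging to $0$. Splitting
\[
\tfrac{\varphi(\eta(t))}{t}-\psi(y)=\tfrac{\varphi(\eta(t))-\varphi(ty)}{t}+\Bigl(\tfrac{\varphi(ty)}{t}-\psi(y)\Bigr),
\]
the second summand tends to $0$ by definition of $\psi$, and the first has valuation $\hat v(\eta(t)-ty)-v(t)=\hat v(\eta(t)/t-y)$, which tends to $\infty$, by the isometry of $\varphi$. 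Hence $\varphi(\eta(t))/t\to\psi(y)$, witnessing $\psi(y)\in\mathcal C_0(Y)$ via sequences $x_\mu:=\varphi(\eta(t_\mu))$ and $a_\mu:=1/t_\mu$. The main technical hurdle is the very existence of the limit defining $\psi$; once Lemma~\ref{limitexistence} secures it, every other step is a routine passage to the limit of the ultrametric inequalities characterising risometries and tangent cones.
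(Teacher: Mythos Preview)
Your proof is correct and follows essentially the same approach as the paper's. The paper defines $\psi(x)$ as $\lim_{t\to 0^+}\varphi(\gamma(t))/t$ for an \emph{arbitrary} definable curve $\gamma$ with $\gamma(t)/t\to x$, then proves independence of $\gamma$ as a separate step; you instead fix the linear curve $\gamma(t)=ty$ from the outset and later recover the curve-independence implicitly (your splitting argument $\varphi(\eta(t))/t-\psi(y)=(\varphi(\eta(t))-\varphi(ty))/t+(\varphi(ty)/t-\psi(y))$ is exactly the paper's well-definedness computation, specialised to the pair $(\eta,\,t\mapsto ty)$). The remaining verifications---boundedness via $\hat{rv}(\varphi(ry))=\hat{rv}(ry)$, the risometry condition via clopenness of valuative balls, and bijectivity via the analogous construction for $\varphi^{-1}$---match the paper's argument line by line.
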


\begin{proof}
First we present a general construction of a risometry $\psi:B\longrightarrow B$ from the risometry $\varphi$, then we deduce the result from this. We assume $p=0$ below.

Consider $x\in B$ and a definable curve $\gamma:(0,\varepsilon)\longrightarrow B$ for which $x$ is the limit of $\gamma(t)/t$ as $t\rightarrow 0^+$. Suppose that the composition $\varphi \circ \gamma$, which is definable and, hence, can be regarded as an injective curve, is such that $t^{-1}\varphi \circ \gamma(t)$ is bounded. By Lemma~\ref{limitexistence}, the limit of $t^{-1}\varphi \circ \gamma(t)$ as $t\rightarrow 0^+$ exists in $B$. We set 
\[
\psi(x):=\lim_{t\rightarrow 0^+}\frac{\varphi \circ \gamma (t)}{t}.
\]
\par 
We claim that $\psi:B\longrightarrow B$ is a definable risometry. First of all, that $\psi$ is well defined is an immediate consequence of $\varphi$ being, in particular, an isometry. Indeed, if $\eta:(0,\varepsilon)\longrightarrow B$ is another definable curve for which $\eta(t)/t$ converges to $x$, then the following equation holds for any $t\in (0,\varepsilon)$,
\[
\hat v\left(\frac{\varphi\circ\gamma(t)}{t}-\frac{ \varphi \circ \eta(t)}{t}\right)=\hat v\left(\frac{\gamma (t)}{t}-\frac{\eta(t)}{t}\right),
\]
from which it is clear that, $t^{-1}\varphi \circ \gamma (t)$ and $t^{-1}\varphi \circ \eta (t)$ have the same limit as $t\rightarrow 0^+$. 
To show that $\psi$ is bijective, let us consider the map $\psi'$ built from $\varphi ^{-1}$ in the analogous way as $\psi$ was built from $\varphi$. For $x\in B$, let us pick a definable curve $\gamma$ such that $\gamma(t)/t$ converges to $x$ as $t\rightarrow 0^+$. For what follows we assume that $\varphi \circ \gamma$ is already a definable curve, without loss of generality. Then 
\begin{align*}
\psi'\circ \psi(x)=\psi'\left(\lim\limits_{t\rightarrow 0^+}\frac{\varphi \circ \gamma (t)}{t}\right) 
=\lim\limits_{t\rightarrow 0^+}\frac{\varphi ^{-1}\circ \varphi \circ \gamma (t)}{t}\\
=\lim\limits_{t\rightarrow 0^+}\frac{\gamma (t)}{t}=x, 
\end{align*}
where we have used that the maps do not depend on the curve considered. Similarly, we can check that $\psi\circ \psi'$ is also the identity on $B$. Therefore, $\psi$ is bijective. 

Finally, we can now check that the risometric condition is satisfied by $\psi$. For this we first highlight the following easy fact: if $0\neq x$ is the limit of $f(t)$ when $t\rightarrow 0^+$, with $f$ any injective function, then for $t>0$ sufficiently small, $\hat{rv}(f(t))=\hat{rv}(x)$. Let $x, y\in X$ and let $\gamma, \eta:(0,\varepsilon)\longrightarrow B$ be definable curves for which $\lim\limits_{t\rightarrow 0^+}\frac{\gamma (t)}{t}=x$ and $\lim\limits_{t\rightarrow 0^+}\frac{\eta (t)}{t}=y$. Then, for $t$ sufficiently small,
\begin{align*}
 \hat{rv}(\psi(x)-\psi(y))=\hat{rv}\left(\frac{\varphi \circ \gamma(t)}{t}-\frac{\varphi \circ \eta(t)}{t}\right)\\
 =\hat{rv}\left(\frac{\gamma(t)}{t}-\frac{\eta(t)}{t}\right)=\hat{rv}(x-y).
\end{align*}

Thus $\psi$ is a risometry. Notice, furthermore, that this construction allows us to make $\psi$ take $0$ to $0$, as $\varphi$ does. 

Now we deduce the result. This follows from the fact that $\psi$ is well defined: this means that if we consider a point $y$ in $\mathcal C_0 (X)\cap B$, for the definition of  $\psi(x)$ we can certainly use a curve in $X$ as the choice of this curve does not matter. Also notice that in this way $\psi(y)\in \mathcal C_0 (\varphi(X\cap B))=\mathcal C_0 (Y)\cap B$. So the same construction goes through with the particularity that points in $\mathcal C_0 (X)\cap B$ are sent to elements in $\mathcal C_0 (Y)\cap B$.
\end{proof}
 \end{subsection} 

\begin{subsection}{T-stratifications induced on tangent cones}
Assume that $(S_i)_i$ is a t-\-stra\-ti\-fi\-cation in $R^n$. Let $S_{\leq i}$ stand for the set $\bigcup _{j\leq i}S_j$.

\begin{Definition} \label{conestrata}
For fixed $p\in R^n$, the partition $(\mathcal C_{p,i})_i$ of $R^n$  is defined as follows.
 \[
 \mathcal C_{p,0}=\mathcal C_p(S_0)
\]
and for $0\leq i<n$, 
\[
\mathcal C_{p,i+1}=\mathcal C_p(S_{\leq i+1})\setminus \mathcal C_p(S_{\leq i}).
\] 
\end{Definition}

Since by definition  $(S_i)_i$ is an $\mathcal L_V$-definable partition, each stratum $\mathcal C_{p,i}$ is an $\mathcal L_V$-definable set. If $X$ is a subset of $R^n$ and $(S_i)_i$ is a t-stratification of $X$, we informally say that $(S_i)_i$ \emph{induces} a  t-stratification on $\mathcal C_p(X)$ if  $(\mathcal C_{p,i})_i$ turns out to be a t-stratification of $\mathcal C_p(X)$.

 We now prove that t-stratifications induce t-stratifications on tangent cones. First we establish an auxiliary lemma, which is related to Corollary 3.22 in~\cite{immi} and will be used to guarantee the existence of definable translaters later. The notation $\ulcorner B\urcorner$ stands for the code (i.e. canonical parameter) of the ball $B$ in $\text{RV}^{\text{eq}}$ (see Section 2.4 in~\cite{immi}).

\begin{Lemma}\label{corollary3.22}
 Let $(S_i)_i$ be a t-stratification reflecting the $\mathcal L_V$-definable map $\rho:R^n\longrightarrow \text{RV}^{\text{eq}}$, $d\leq n$ and $B\subseteq R^n$ be a ball maximal such that $S_{\leq d-1}\cap B=\emptyset$. If $W:=tsp_B((S_i)_i,\rho)$ is exhibited by the projection $\pi:R^n\longrightarrow R^d$, then there is a $\ulcorner B \urcorner$-definable translater $(\alpha_x)_{x\in \pi(B-B)}$ witnessing the $W$-translatability of $((S_i)_i,\rho)$ on $B$.
\end{Lemma}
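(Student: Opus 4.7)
The plan is to adapt the proof of Corollary 3.22 in~\cite{immi} to our real closed valued field setting; essentially the only thing that needs arguing is a reduction of parameters.

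First, I would observe that the hypothesis $S_{\leq d-1}\cap B=\emptyset$ forces $B\subseteq S_d\cup\dots\cup S_n$, so condition 2' in the definition of t-stratification (reflecting $\rho$) directly yields that $((S_i)_i,\rho)$ is $d$-translatable, in fact $W$-translatable, on $B$ for $W=tsp_B((S_i)_i,\rho)$. Thus \emph{some} $\mathcal L_V$-definable translater $(\alpha_x)_{x\in\pi(B-B)}$ exists; the remaining task is to show that such a translater can be selected using only parameters coming from $\ulcorner B\urcorner$ (together with the parameters defining $(S_i)_i$ and $\rho$, which we regard as part of the ambient setup).

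Next, the ball $B$ is recoverable from $\ulcorner B\urcorner$ by the very meaning of a code, and the subspace $W$ is canonically extracted from $(B,(S_i)_i,\rho)$ via the translatability structure, so $W$ is itself $\ulcorner B\urcorner$-definable; hence so is any exhibition $\pi$ of $W$ (or at least one may be chosen $\ulcorner B\urcorner$-definably). The core step is then to repeat the argument of Corollary 3.22 in~\cite{immi}. That argument works in any suitable Henselian valued field of equi-characteristic 0 (in the sense of Section 2.5 of~\cite{immi}); the key ingredients are the identification of translaters up to a natural equivalence invariant under $\mathcal L_{\text{Hen}}$-automorphisms fixing $\ulcorner B\urcorner$, together with a uniform definable-choice argument for selecting a representative. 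By Example 5.1 in~\cite{immi}, which was invoked to establish Theorem~\ref{generalexist}, our structure $(R,V)$ fits this framework, so the proof of Corollary 3.22 in~\cite{immi} applies essentially verbatim.

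The main obstacle I anticipate is bookkeeping: one must verify that each appeal to resplendency and to uniform $\text{RV}^{\text{eq}}$-definability in the original proof transfers cleanly to the $\mathcal L_V$-setting, and in particular that the automorphism-invariance of the chosen translater translates into genuine definability over $\ulcorner B\urcorner$ in our specific language. This is not a conceptual hurdle, given that $(R,V)$ satisfies the suitability hypotheses of~\cite{immi}, but it requires care to spell out how the selection procedure yields formulas with parameters drawn only from $\ulcorner B\urcorner$.
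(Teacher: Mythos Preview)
Your proposal follows a different route from the paper. The paper does not attempt to reduce parameters of an already-existing translater via automorphism-invariance or resplendency, nor does it adapt the proof of Corollary~3.22 of~\cite{immi} as a black box. Instead it invokes Proposition~3.19(3) and (3') of~\cite{immi} directly: these already produce a $\ulcorner B\urcorner$-definable \emph{compatible family of risometries} $(\alpha_{q,q'})_{q,q'\in\pi(B)}$ between fibres of $\pi$, respecting $((S_i)_i,\rho)$ and satisfying $\dir(\alpha_{q,q'}(z)-z)\in W$. From this the paper \emph{explicitly constructs} the translater by $\alpha_x(z):=\alpha_{\pi(z),\,\pi(z)+x}(z)$ and then verifies each of the four conditions in Definition~\ref{translaters} by hand, together with the risometry property; the latter is a genuine computation using Lemma~2.10 of~\cite{immi}. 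The $\ulcorner B\urcorner$-definability is thus inherited for free from Proposition~3.19(3), with no separate parameter-reduction step.

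What your approach would buy, if it goes through, is brevity: no need to re-derive the translater axioms. What the paper's approach buys is concreteness and independence from the exact formulation of Corollary~3.22. Note that the paper works with the Lemma~3.7 characterisation of translatability (your Definition~\ref{translaters}) rather than Definition~3.1 of~\cite{immi}, so ``applies essentially verbatim'' conceals a translation between two notions of translater that you have not carried out. More seriously, your write-up is a plan rather than a proof: the bookkeeping you identify as the main obstacle---turning automorphism-invariance into genuine $\ulcorner B\urcorner$-definability in the $\mathcal L_V$-setting---is precisely the content that would need to be supplied, and you defer it entirely. (Also, a small point: $\pi$ is given in the hypothesis, so no definable choice of an exhibition is required.)
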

\begin{proof}
 Take $Q:=\pi(R^n)$ and $\chi:Q\times R^{n-d}\longrightarrow \text{RV}^{\text{eq}}$ be given by $\chi(q,x)=\rho(q\hat{\ }x)$. Also set $S_{i,q}=S_i\cap \pi^{-1}(q)$ and $\chi_q=\chi|_{\pi^{-1}(q)}$ for any $q\in Q$. Since $(S_i)_i$ is already a t-stratification reflecting $\chi$, all the hypotheses of Proposition 3.19 in~\cite{immi} are satisfied. By (3) of the mentioned proposition, there is a compatible $\ulcorner B\urcorner$-definable family of risometries $(\alpha_{q,q'}:((S_{i,q})_i,\chi_q)\longrightarrow ((S_{i,q})_i,\chi_{q'}))_{q,q'\in \pi(B)}$, respecting $((S_i)_i,\chi)$. Furthemore, notice that by the choice of $W$ and $\pi$, $(S_i)_i$ is $W$$-\pi$-pointwise translatable on $\pi(B)\times R^{n-d}$ and $S_d\cap (\pi(B)\times R^{n-d})\neq \emptyset$, so (3') of Proposition 3.19 in~\cite{immi} holds too. This is, for all $q,q'\in \pi(B)$ and $z\in \{q\}\times R^{n-d}$, $\dir(\alpha_{q,q'}(z)-z)\in W$.
 
 For $x\in \pi(B-B)$ and $z\in B$, let $q:=\pi(z)$ and then set $\alpha_{x}(z):=\alpha_{q,q+x}(z)$. This defines the maps $(\alpha_{x}:B\longrightarrow B)_{x\in \pi(B-B)}$. Below we show that they satisfy the conditions (i)-(iv) of being a translater of $((S_i)_i,\rho)$ on $B$ (see Definition~\ref{translaters}). Aftewards we show that they are risometries. Let $x,x'\in \pi(B-B)$, $z\in B$ and set $q=\pi(z)$.
 \begin{enumerate}[(i)]
 \setlength\itemsep{0.2ex}
  \item $((S_i)_i,\rho)\circ \alpha_x=((S_i)_i,\rho)$.\\
  This holds because each risometry $\alpha_{r,r'}$ respects $((S_i)_i,\rho)$.
  \item $\alpha_{x'}\circ \alpha_{x}=\alpha_{x+x'}.$\\
  Following the definitions of the pertinent maps the equation is equivalent to the following one, 
  \begin{align*}
   \alpha_{q+x,q+x+x'}\circ \alpha_{q,q+x}(z)=\alpha_{q,q+x+x'}(z),
  \end{align*}
  and this follows at once from the compatibility of the maps $(\alpha_{r,r'})_{r,r'\in \pi(B)}$.
  \item $\pi(\alpha_{x}(z)-z)=x$.\\
  Since $\alpha_{q,q+x}$ takes the set $\{q\}\times R^{n-d}$ to $\{q+x\}\times R^{n-d}$, $\pi(\alpha_{q,q+x}(z))=q+x$. So 
  \[
  \pi(\alpha_{x}(z)-z)=\pi(\alpha_{q,q+x}(z)-z)=x.
  \]
  \item $\dir(\alpha_{x}(z)-z)\in W$, if $x\neq 0$.\\
  Since (3') of Proposition 3.19 in~\cite{immi} holds, we have that 
  \[ 
\dir(\alpha_x(z)-z)=\dir(\alpha_{q,q+x}(z)-z)\in W.
\]  
 \end{enumerate}
To check that $\alpha_x$ is a risometry, let $z,z'\in B$ and set $q=\pi(z)$ and $q'=\pi(z')$ with $q\neq q'$. We want to verify that $\hat{rv}(\alpha_{x}(z)-\alpha_x(z'))=\hat{rv}(z-z')$.

Let $w=\alpha_{q,q'}(z)$, so $\pi(w)=q'$. Since $\alpha_{q',q'+x}$ is a risometry, 
\begin{equation}
 \hat{rv}(\alpha_{q',q'+x}(w)-\alpha_{q',q'+x}(z'))=\hat{rv}(w-z').
\end{equation}
We also claim that 
\begin{equation}
 \hat{rv}(\alpha_{q,q+x}(z)-\alpha_{q',q'+x}(w))=\hat{rv}(z-w).
\end{equation}
The required equation follows using (1) and (2) and Lemma 2.10 in~\cite{immi}. To apply such lemma we need to check that $\hat v(z-z')=\min \{\hat v(z-w),\hat v(w-z')\}$ holds. Suppose it does not, then $\hat v(z-w)=\hat v(w-z')$ and, hence, $\dir(z-w)=\dir(w-z')$. Since $\pi$ is an exhibition of $W$ and $\dir(z-w)=\dir(z-\alpha_{q,q'}(z))\in W$ we would have that $q-q'=0$, a contradiction. 

Thus it only remains to prove (2). By the choice of $\pi$, it is enough to show that both $\pi(\alpha_{q,q+x}(z)-\alpha_{q',q'+x}(w))=\pi(z-w)$ and $\dir(\alpha_{q,q+x}(z)-\alpha_{q',q'+x}(w))=\dir(z-w)$ hold (again using Lemma 2.10 in~\cite{immi}).
Indeed, 
\[
\pi(\alpha_{q,q+x}(z)-\alpha_{q',q'+x}(w))=(q+x)-(q'+x)=q-q'=\pi(z-w);
\]
and for the second equation recall that $\pi$ is an exhibition of $W$, so we have
\begin{align*}
\overline{\pi} (\dir(\alpha_{q,q+x}(z)-\alpha_{q',q'+x}(w))=\dir \pi(\alpha_{q,q+x}(z)-\alpha_{q',q'+x}(w))\\
=\dir \pi(z-w)=\overline{\pi}(\dir (z-w)),
\end{align*}
where $\overline{\pi}$ is the projection induced by $\pi$ on the residue field, hence both equations hold.
\end{proof}

Now we prove Theorem~\ref{intro1}.

\begin{Theorem}\label{main}
Let $X$ be an $\mathcal{L}_V$-definable subset of $R^n$, $p\in R^n$ and suppose that $(S_i)_i$ is a t-stratification of $X$. Then $(\mathcal  C_{p,i})_i$ is a t-stratification of $\mathcal C_p (X)$.
\end{Theorem}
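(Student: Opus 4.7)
My plan is to verify the two defining conditions of a t-stratification for $(\mathcal C_{p,i})_i$ reflecting $\mathcal C_p(X)$, assuming $p=0$ without loss of generality. The dimension condition $\dim \mathcal C_0(S_{\leq d})\leq d$ follows by considering the auxiliary set $D(S_{\leq d})=\{(y,r)\in R^n\times R_{>0}: ry\in S_{\leq d}\}$ used in Corollary~\ref{simplify}. Each $r$-fiber is a scaled copy of $S_{\leq d}$, of dimension at most $d$, so $\dim D(S_{\leq d})\leq d+1$; since $\mathcal C_0(S_{\leq d})\times\{0\}=\overline{D(S_{\leq d})}\cap(R^n\times\{0\})$ sits inside the frontier of $D(S_{\leq d})$, whose dimension drops by at least one in our weakly o-minimal setting, one gets $\dim \mathcal C_0(S_{\leq d})\leq d$.

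The bulk of the proof is translatability. Fix $d$ and a ball $B\subseteq R^n\setminus \mathcal C_0(S_{\leq d-1})$, and write $B_t:=tB$ for $t\in R_{>0}$. I would first establish that some $\mu_0\in\Gamma$ satisfies $B_t\cap S_{\leq d-1}=\emptyset$ whenever $v(t)>\mu_0$. Otherwise, the curve selection lemma (Proposition~\ref{plainselection}) together with definable choice (available in this T-convex setting, as in the proof of that proposition) yields an $\mathcal L_V$-definable curve $t\mapsto y_t\in B$ with $ty_t\in S_{\leq d-1}$ and $v(t)\to\infty$. Since valuative balls are clopen, $y_t$ converges to some $y_\infty\in B$, and $(y_t,t)\in D(S_{\leq d-1})$ converges to $(y_\infty,0)$, forcing $y_\infty\in B\cap \mathcal C_0(S_{\leq d-1})$, a contradiction. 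For $v(t)>\mu_0$ the t-stratification $(S_i)_i$ is $d$-translatable on $B_t$, and Lemma~\ref{corollary3.22} supplies an $\mathcal L_V$-definable family $(\alpha^t_x)_{x\in \pi_t(B_t-B_t)}$ witnessing $W_t$-translatability. Combining definable choice with weak o-minimality I would arrange that $\pi_t=\pi$ and $W_t=W$ are constant on a cofinal set of small $t$.

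With these ingredients in place, for each $z\in B$ I pick an $\mathcal L_V$-definable curve $\gamma_z$ with $\gamma_z(t)/t\to z$, choosing it inside the smallest stratum containing $z$ in $\mathcal C_0(X)$ (or inside $X$ when $z\in \mathcal C_0(X)$) using Proposition~\ref{curvedef2}. For $y\in \pi(B-B)$ I then set
\[
\beta_y(z):=\lim_{t\to 0^+} t^{-1}\alpha^t_{ty}(\gamma_z(t)).
\]
The limit lies in $B$ by Lemma~\ref{limitexistence} since $\alpha^t_{ty}(\gamma_z(t))\in B_t=tB$ forces $t^{-1}\alpha^t_{ty}(\gamma_z(t))\in B$. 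Independence of the choice of $\gamma_z$ mirrors the argument in Proposition~\ref{stronger}: two curves with the same tangent direction at $0$ differ valuatively by more than $v(t)$, and the isometry property of $\alpha^t$ preserves this. The risometry property of $\beta_y$ and the translater identities (composition, $\pi$-projection equal to $y$, direction in $W$) transfer from $(\alpha^t_x)$ by passing to the limit. Preservation of strata is automatic: since $\alpha^t_{ty}$ preserves $S_j\cap B_t$, the curve $t\mapsto \alpha^t_{ty}(\gamma_z(t))$ lies in $S_{\leq i}$ whenever $\gamma_z$ does, and so witnesses $\beta_y(z)\in \mathcal C_0(S_{\leq i})$; bijectivity via $\beta_{-y}=\beta_y^{-1}$ upgrades these inclusions to equalities, giving preservation of both $(\mathcal C_{0,i})_i$ and $\chi_{\mathcal C_0(X)}$.

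The step I expect to be the main obstacle is the uniformity in $t$: arranging the family $(\alpha^t_x)$, the exhibition $\pi$, and the subspace $W$ to be definable and stable as $t\to 0^+$. This combines Lemma~\ref{corollary3.22}, definable choice in the T-convex setting, and the rigidity of $\mathcal L_V$-definable maps into the Grassmannian over the residue field. Once this uniform choice is secured, the curve-and-limit construction follows the same pattern as the proof of Proposition~\ref{stronger} and goes through without difficulty.
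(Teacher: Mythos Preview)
Your proposal follows essentially the same strategy as the paper: scale the ball $B$ toward the origin, show the scaled balls $B_t$ eventually avoid $S_{\leq d-1}$ via a curve-selection contradiction, extract translaters $(\alpha^t_x)$ on the $B_t$ from Lemma~\ref{corollary3.22}, and define the translater on $B$ by $\alpha_x(z)=\lim_{t\to 0^+} t^{-1}\alpha^t_{tx}(z_t)$, verifying the risometry and translater axioms by passing to the limit. The paper centres $B_t$ at a point $u_t=\gamma(t)$ of a curve in $S_d$, but since $\hat v(u_t-tu)>\mu+v(t)$ for small $t$, its $B_t$ and your $tB$ are literally the same ball; the paper's explicit curve $z_t:=u_t+t(z-u)$ plays the role of your $\gamma_z$ and has the virtue of being visibly uniformly definable in $z$. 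Your dimension argument via the frontier of $D(S_{\leq d})$ is a legitimate alternative to the paper's use of $\tau(\mathcal C_p(A))=\mathcal C_p(\tau(A))$ for coordinate projections.

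On the step you correctly flag as the main obstacle, the paper's mechanism is sharper than ``weak o-minimality and definable choice''. Weak o-minimality of $R$ does not directly control a definable map $t\mapsto W_t\in\mathbb G_d(\overline R^n)$ into the Grassmannian over the residue field. The paper instead factors through $\Gamma$: since $\mathrm{tsp}_{B_t}(S_i)_i$ depends only on $\ulcorner B_t\urcorner$, one obtains a definable relation $E\subseteq \Gamma\times \mathbb G_d(\overline R^n)$, and then invokes o-minimality of $\Gamma$ (from power-boundedness, via \cite{driesII}) together with orthogonality between $\Gamma$ and the residue field to conclude that $W_t$ is eventually constant. For the translaters themselves a compactness argument reduces the $\ulcorner B_t\urcorner$-definable families to finitely many formulas, giving the uniform definability needed for the limit construction.
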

\begin{proof}
 For any subset $A$ of $R^n$, $\dim(A)$ is the maximal $m\geq 0$ such that there is a coordinate projection $\tau:{R}^n\longrightarrow R^m$ for which $\tau (A)$ contains a ball. Also notice that for such $\tau$ and $A$, $\tau ({C}_p (A))={C}_p (\tau(A))$. Take $1\leq d\leq n-1$, then we know that $\dim(S_{\leq d})\leq d$. From these facts it is easy to see that $\dim({C}_p (S_{\leq d}))\leq d$. By the properties of dimension we deduce that 
 \[
 \dim({C} _{p,d})=\dim ({C}_p (S_{\leq d})\setminus {C}_p (S_{\leq d-1}))\leq d,
 \] 
 as required.
 
 For simplicity now we assume that $p=0$. Fix $1\leq d\leq n$ and let $B:=B(u, >\mu)$ be maximal such that $B\cap {C} _{0,\leq d-1}=\emptyset$, with $u\in  C_0(S_d)$. We claim that $({C} _{0,i})_i$ is $d$-translatable on $B$. To prove this, let us fix a definable curve $\gamma:(0,1)\longrightarrow {}^*S_d$ for which $\gamma(t)/t$ converges to $u$ when $t\rightarrow 0^+$. For every $t\in (0,1)$, set $u_t:=\gamma(t)$ and $B_t:=B(u_t,>\mu +v(t))$. 
 
 We can investigate whether $(S_i)_i$ is $d$-translatable on the balls $B_t$. Assume that $(S_i)_i$ is not $d$-translatable on $B_t$ for almost all $t$. That $d$-translatability of the t-stratification $(S_i)_i$ is a definable condition is a consequence of Proposition 3.19(1) in~\cite{immi}, so by weakly o-minimality of $R$ we may assume that $(S_i)_i$ is not $d$-translatable for all $0<t<\varepsilon$, for some small $\varepsilon>0$. For such $t$ we deduce that $B_t\cap S_{\leq d-1}\neq \emptyset$. The definability of $B_t\cap S_{\leq d-1}$ (taking $t$ as a parameter), the existence of definable Skolem functions for $RCF_{convex,c}$ and the proof of Lemma~\ref{limitexistence}, imply that there is a definable curve $\eta$ such that $\eta(t)\in B_t\cap S_{ d-1}$ for all $t$ in the domain of $\eta$ and $\lim_{t\rightarrow 0^+}\eta(t)=0$. From these conditions it follows that the quotient $\eta(t)/t$ is bounded, so by Lemma~\ref{limitexistence} the limit of $\eta(t)/t$, when $t\rightarrow 0^+$, exists; let $y$ be such limit. It is clear that $y\in {C} _{0,\leq d-1}$ and since 
 \[
 \hat v(y-u)=\hat v\left(\frac{y_t}{t}-\frac{x_t}{t}\right)>\mu,
 \] 
we have that $y \in B\cap {C} _{0,\leq d-1}$, a contradiction. 
 
 Hence, it must be true that $(S_i)_i$ is $d$-translatable on $B_t$ for almost all $t$. Consider the collection $\{tsp_{B_t}(S_i)_i\}_t$ of subspaces of $\mathbb R^d$. There must exist a subspace $W_0\subseteq \mathbb R^d$ for which $W_0=tsp_{B_t}(S_i)_i$ for all $t$ sufficiently small. Consider the definable set $E\subseteq \Gamma\times \mathbb G_d(\mathbb R^n)$ given by $(\lambda,W)\in E$ if and only if there is $t\in R_{>0}$ such that $v(t)=\lambda$ and $tsp_{B_t}(S_i)_i=W$. The power boundedness of $R$ as an o-minimal $\mathcal L_{or}$-structure implies that $\Gamma$ is o-minimal (Proposition 4.3 in~\cite{driesII}). The orthogonality between $\Gamma$ and $\mathbb R$ and the o-minimality of $\Gamma$ imply that there are $\lambda_0\in \Gamma$ and $W_0\in \mathbb G_d(\mathbb R^n)$ such that for all $\lambda\geq \lambda _0$, $(\lambda,W_0)\in E$ and whenever $(\lambda, W)\in E$, $W=W_0$. Take $t_0\in R_{>0}$ such that $v(t_0)=\lambda$; so $W_0=tsp_{B_{t_0}}(S_i)_i$ and for all $0<t<t_0$, $tsp_{B_t}(S_i)_i=W_0$. 
 
%
%
 We will show that $({C} _{p,i})_i$ is $W_0$-translatable on $B$. Fix an exhibition $\pi$ of $W_0$. By Lemma~\ref{corollary3.22}, for every $t\in (0,t_0)$, there is a $\ulcorner B_t\urcorner $-definable translater $(\alpha_{t,x})_{ x\in \pi  (B_t-B_t)}$ witnessing the $W_0$-translatability of $(S_i)_i$ on $B_t$. Since being a translater of a t-stratification is expressable in a first order way, a compactness argument ensures that there are finitely many formulas defining all such translaters (these formulas take at least $t$ as parameter). 
 
 The next step is to define a translater $(\alpha _x)_{x\in \pi(B-B)}$ of $( C_{p,i})_i$ on $B$ witnessing $W_0$-translatability. For $z\in B$ and $t\in (0,t_0)$ set $z_t:=u_t+t(z-u)$. Then for any $t\in (0,t_0)$, $\hat v(u_t-z_t)=\hat v(u-z)+ v(t)>\mu + v(t)$, so $z_t\in B_t$; furthermore, $\hat v(t^{-1}z_t-z)=\hat v(t^{-1}u_t-u)$, so $t^{-1}z_t$  converges to $z$. Let $x\in \pi (B-B)$, then for any $t\in (0,t_0)$, $\alpha_{t,tx}(z_t)\in B_t$. Notice that $t\mapsto t^{-1}\alpha_{t,tx}(z_t)$ is a definable mapping thanks to the fact that all the translaters $(\alpha_{t,x})_{x\in \pi(B_t-B_t)}$ are defined by finitely many formulas. Thus Lemma~\ref{limitexistence} allows us to set 
  \[
  \alpha_x(z):=\lim _{t\rightarrow 0}\, t^{-1}\alpha_{t,tx}(z_t)
  \]
  as a function into $R^n$. This function is well defined, i.e. the limit does not depend intrinsically on the curve $t\mapsto z_t$. To show this suppose that $\eta:(0,t_0)\longrightarrow \bigcup_{t} B_t$ is a curve for which $\eta(t)\in B_t$ for all $t\in (0,t_0)$. Since each risometry $\alpha_{r,r'}$ is in particular an isometry, the following equation holds for every $t\in (0,t_0)$,
  \[
  \hat{v}\left(\frac{\alpha_{t,tx}(z_t)}{t}-\frac{\alpha_{t,tx}\circ \eta(t)}{t}\right)=\hat v\left(\frac{z_t}{t}-\frac{\eta(t)}{t}\right);
  \]
  it is, then, clear that $t^{-1}\alpha_{t,tx}(z_t)$ and $t^{-1}\alpha_{t,tx}(\eta(t))$ have the same limit when $t\rightarrow 0^+$.
  
  A quick calculation shows that $\alpha_x(z)\in B$ and for $z,z'\in B$ the equations  
  \begin{align*}\hat{rv}(\alpha_x(z)-\alpha_x(z')) =\hat{rv}(t^{-1}\alpha_{t,tx}(z_t)-t^{-1}\alpha_{t,tx}(z_t'))\\ 
  =\hat{rv}(t^{-1}z_t-t^{-1}z_t') =\hat{rv}(z-z'),\end{align*} 
  exhibit that $\alpha_x$ satisfies the risometric condition.
  
  Moreover, consider repeating the construction of $\alpha_x$ with the translaters \\$(\alpha^{-1}_{t,x})_{x\in \pi(B_n-B_n)}$. This gives us a map $\beta_x:B\longrightarrow B$. That the construction provides a well defined map implies that, if $z\in B$, 
  \begin{align*}
   \beta_x\circ \alpha_x(z)=\beta_x(\lim\limits_{t\rightarrow 0}t^{-1}\alpha_{t,tx}(z_t))
   =\lim\limits_{t\rightarrow 0}t^{-1}\alpha^{-1}_{t,tx}(\alpha_{t,tx}(z_t))=z.
  \end{align*}
Similarly one shows that $\alpha_x\circ \beta_x$ is the identity on $B$ as well. Thus the maps $\alpha_x:B\longrightarrow B$ are risometries. 
  
  We now show that the family of risometries $(\alpha _x)_{x\in \pi(B-B)}$ satisfy the required properties of a translater of $( C_i)_i$ on $B$. Let $z\in B$, $x,x'\in \pi (B-B)$ and pick a definable curve $t\mapsto z_t\in B_t$ such that $t^{-1}z_t$ converges to $z$ when $t\rightarrow 0^+$.
  \begin{enumerate}[(i)]
  \setlength\itemsep{0.2ex}
   \item $z\in  C_d$ if and only if $\alpha_x(z)\in  C_d$.\\
   Suppose that $z\in  C_d$. Since the function $\alpha_x$ is well defined, the curve  $t\mapsto z_t$ can be supposed to being taken inside $S_d$. So, for $x\in \pi(B-B)$, $\alpha_{t,tx}(z_t)\in S_d$, as $(\alpha_{t,x})_{x\in \pi(B_n-B_n)}$ is a translater of $(S_i)_i$. Thus $\alpha_x(z)$, being the limit of $t^{-1}\alpha_{t,tx}(z_t)$, is in $ C_d$. The converse statement follows similarly.
   \item $\alpha_x\circ \alpha _{x'}=\alpha_{x+x'}$.\\
   Using that the maps $\alpha_x$ are well defined,
   \begin{align*}
   \alpha_{x}\circ \alpha_{x'}(z)=\lim\limits_{t\rightarrow 0}t^{-1}\alpha_{t,tx}(\alpha_{t,tx'}(z_t)) =\lim\limits_{t\rightarrow 0}t^{-1}\alpha_{t,tx}\circ \alpha_{t,tx'}(z_t)\\ =\lim\limits_{t\rightarrow 0}t^{-1}\alpha_{t,t(x+x')}(z_n)=\alpha_{x+x'}(z).
   \end{align*}
   \item $\pi (\alpha_x(z)-z)=x$.\\
   We can see that, 
   \begin{align*}
    \pi(\alpha_x(z)-z)=\pi\left(\lim_{t\rightarrow 0}t^{-1}\alpha_{t,tx}(z_t)-t^{-1}z_t\right)\\ 
    =\lim_{t\rightarrow 0}t^{-1}\pi(\alpha_{t,tx}(z_t)-z_t) 
    =\lim_{t\rightarrow 0}t^{-1}(tx)=x.
   \end{align*}
   \item $dir(\alpha_x(z)-z)\in W_0$.\\
   Notice that for $t$ sufficiently small it holds that,
   \begin{align*}
   \dir(\alpha_x(z)-z)=\dir(t^{-1}\alpha_{t,tx}(z_t)-t^{-1}z_t)\\
  =\dir(\alpha_{t,tx}(z_t)-z_t)\in W_0. 
   \end{align*}
  \end{enumerate}

  This finishes the proof of $(\alpha_{x})_{x\in \pi(B-B)}$ being a translater of $({C}_i)_i$ on $B$ witnessing $W_0$-translatability. Thus, $({C}_i)_i$ is $d$-translatable on $B$.
 \end{proof}  
 \end{subsection}
 \end{section}
 
\begin{section}{Classical tangent cones and archimedean t-stratifications}\label{classical}
We now turn our attention to classical tangent cones and the archimedean couterpart of t-\-stra\-tifi\-cations. We remind the reader that by classical tangent cones we mean tangent cones in Euclidean space. The real field $\mathbb R$ is considered as a structure in the language $\mathcal L_{or}$. The $\mathcal L_{or}$-definable subsets of $\mathbb R^n$ are exactly the semi-algebraic sets.

 In order to transfer some of our previous results to $\mathbb R^n$, we let $R$  be a non-standard model of $\mathbb R$. Let $^*\mathbb R$ be a non-principal ultrapower of $\mathbb R$ (ensuring $\aleph_1-$saturation).  We keep roughly the same notation as for $R$ and, accordingly, we focus on the valuative structure of $^*\mathbb R$ in the language $\mathcal L_V$. In this case we can concretely make $V=\{x\in {}^*\mathbb R\ |\ \exists N\in \mathbb N(-N\leq x\leq N)\}$.
 
For a subset $X$ of $\mathbb R^n$, $^*X$ denotes its non-standard version in $^*\mathbb R^n$. For $x\in \mathbb R^n$ we identify it with its image under the canonical embedding of $\mathbb R^n$ into $^*\mathbb R^n$; so $^*x=x$ for such a point. Finally, recall that we denote by $\|\cdot\|$ the usual norm on $\mathbb R^n$ and we let $\|\cdot\|_{_{}^*\mathbb R}$ be the norm it induces on $^*\mathbb R^n$ (which is the same $^*\mathbb R$ carries naturally as real closed field).

 We first explore some facts connected to that in Proposition~\ref{stronger}. Accordingly with previous notation, the tangent cone of $Y\subseteq {}^*\mathbb R^n$ at $p\in {}^*\mathbb R^n$ is going to be $\mathcal C_p(Y)$, meanwhile the tangent cone of $X\subseteq \mathbb R^n$ at $p\in \mathbb R^n$ is $C_p(X)$. It is worth noting that when $X\subseteq \mathbb R^n$ is semi-algebraic and $p\in \mathbb R^n$, $^*C_p(X)=\mathcal C_p({}^*X)$. The following proposition bears a similar fact.

\begin{Proposition}\label{description}
 Suppose that $X\subseteq \mathbb R^n$ is semi-algebraic and $p\in \mathbb R^n$. Then, for $y\in \mathbb R^n$, $y\in \mathcal C_{p}({}^*X)$ iff  \[\exists x\in {}^*X,a\in \mathbb {}^*\mathbb R_{>0}\left(\hat{v}(x-p)>0\wedge \hat{v}(a(x-p)-y)>0\right).\]
\end{Proposition}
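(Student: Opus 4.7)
The forward direction ($\Rightarrow$) is immediate from the valuative description of $\mathcal C_p$ given in the remark after Definition~\ref{tcdef}: if $y\in \mathcal C_p({}^*X)$ then for every $\lambda\in \Gamma$, and in particular for $\lambda=0$, there exist $x\in {}^*X$ and $a\in {}^*\mathbb R_{>0}$ with $\hat v(x-p)>\lambda$ and $\hat v(a(x-p)-y)>\lambda$. So the whole content is in the converse.

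For the converse ($\Leftarrow$), the plan is to use $\aleph_1$-saturation/transfer on a monotone semi-algebraic family in order to pass from a single infinitesimal witness in $^*\mathbb R$ to a sequence of arbitrarily good witnesses in $\mathbb R$. Concretely, I would consider the semi-algebraic family (with parameters $p,y\in \mathbb R^n$) indexed by $t>0$
\[
A_t:=\{(x',a')\in X\times \mathbb R_{>0}\mid \|x'-p\|<t \text{ and } \|a'(x'-p)-y\|<t\},
\]
so that $t\mapsto A_t$ is monotone in $t$. Given a pair $(x,a)$ witnessing the right-hand side, both $\|x-p\|_{{}^*\mathbb R}$ and $\|a(x-p)-y\|_{{}^*\mathbb R}$ are infinitesimal; choosing any positive infinitesimal $t_0\in {}^*\mathbb R_{>0}$ bounding both shows $(x,a)\in {}^*A_{t_0}$, so $^*A_{t_0}\neq \emptyset$. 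Since $t_0<t$ for every standard $t>0$, monotonicity of the family gives $^*A_t\supseteq {}^*A_{t_0}\neq \emptyset$, and because $A_t$ is semi-algebraic over $\mathbb R$, transfer yields $A_t\neq \emptyset$ in $\mathbb R$ for every standard $t>0$. Taking $t=1/n$ produces standard sequences $(x_n)\subseteq X$ and $(a_n)\subseteq \mathbb R_{>0}$ with $x_n\to p$ and $a_n(x_n-p)\to y$, so by definition $y\in C_p(X)$. Finally, I invoke the identity $^*C_p(X)=\mathcal C_p({}^*X)$ recalled just before the proposition to conclude $y\in \mathcal C_p({}^*X)$.

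There is no serious obstacle here; the only subtle point is that the standardness of $y$ is what allows the step from "infinitesimally good witnesses exist in $^*\mathbb R$" to "witnesses of every positive real accuracy exist in $\mathbb R$". If $y$ were a genuinely non-standard element, membership in $\mathcal C_p({}^*X)$ would require refined witnesses at every level $\lambda\in \Gamma$, something the hypothesis "$\hat v(\cdot)>0$" alone cannot supply; but for $y\in \mathbb R^n$ the two notions coincide because the classical tangent cone $C_p(X)$ is a semi-algebraic object and its non-standard extension agrees with $\mathcal C_p({}^*X)$.
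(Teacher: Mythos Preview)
Your proof is correct and follows essentially the same route as the paper: the backward direction is the transfer argument (infinitesimal witness $\Rightarrow$ witnesses at every standard accuracy in ${}^*\mathbb R$ $\Rightarrow$ by transfer, witnesses in $\mathbb R$ $\Rightarrow$ $y\in C_p(X)$ $\Rightarrow$ $y\in {}^*C_p(X)=\mathcal C_p({}^*X)$), and the paper handles the forward direction by simply noting the steps are reversible, whereas you invoke the valuative description with $\lambda=0$, which is if anything cleaner. Your framing via the monotone family $A_t$ is just a slightly more explicit packaging of the same transfer step the paper uses with the formulas $\exists x\in X\,\exists a>0(\|x-p\|<1/n \wedge \|a(x-p)-y\|<1/n)$.
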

\begin{proof}
 For simplicity, we will assume that $p=0$. Let us suppose that $y\in \mathbb R^n$ satisfies the formula on the right-hand side. For any positive integer $n$ the formula 
 \[
 \exists x\in {}^*X,a\in {}^*\mathbb R_{>0}\left(\|x\|_{_{}^*\mathbb R}<\tfrac{1}{n}\wedge \|ax-y\|_{_{}^*\mathbb R}<\tfrac{1}{n}\right)
 \]
 is true in $^*\mathbb R$ (here we use that the valuation topology on $^*\mathbb R^n$ coincides with the topology induced by $\|\cdot\|$). By the transfer principle, the formula 
 \[
 \exists x\in X,a\in \mathbb R_{>0}\left(\|x\|<\tfrac{1}{n}\wedge \|ax-y\|<\tfrac{1}{n}\right)
 \] 
 is true in $\mathbb R$ for any positive integer $n$. Therefore $y\in {}^*C_0(X)=\mathcal C_0(^*X)$. Notice that all the steps in this argument are reversible.
\end{proof} 

Using this result and Proposition~\ref{stronger}, we prove a relation between tangent cones. Recall the notation $\overline{X}$ stands for the topological closure of $X$. In the following proposition and its proof, $B$ denotes the ball $\{x\in {}^*\mathbb R^n\ |\ \hat{v}(x)>0\}$.

\begin{Proposition}\label{first}
Assume $X,Y\subseteq \mathbb R^n$ are semi-algebraic sets, $p\in \overline{X}\cap \overline{Y}$ and that there is a definable risometry $\varphi$ on $B$ taking $^*X\cap B$ to $^*Y\cap B$ and fixing $p$. Then $C_p(X)=C_p(Y)$. 
\end{Proposition}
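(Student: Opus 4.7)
The plan is to combine Proposition~\ref{stronger} with the cone structure of tangent cones and then reduce the archimedean statement to the valuative one treated by Proposition~\ref{description}. After translating, assume $p=0$. By Proposition~\ref{stronger}, $\varphi$ yields a definable risometry $\psi:B\longrightarrow B$ that fixes $0$ and sends $\mathcal C_0({}^*X)\cap B$ onto $\mathcal C_0({}^*Y)\cap B$; replacing $\varphi$ by $\varphi^{-1}$ gives the reverse direction, so it suffices to establish $C_0(X)\subseteq C_0(Y)$.

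Fix a nonzero $y\in C_0(X)$ and a positive infinitesimal $\epsilon\in{}^*\mathbb R_{>0}$. Since $C_0(X)=\mathcal C_0({}^*X)\cap\mathbb R^n$ and $\mathcal C_0({}^*X)$ is closed under positive scalar multiplication, $\epsilon y\in\mathcal C_0({}^*X)$; as $y\in\mathbb R^n\setminus\{0\}$ has $\hat v(y)=0$, we obtain $\hat v(\epsilon y)=v(\epsilon)>0$, so $\epsilon y\in B$ and hence $\psi(\epsilon y)\in\mathcal C_0({}^*Y)\cap B$. The risometric condition together with $\psi(0)=0$ gives $\hat{rv}(\psi(\epsilon y))=\hat{rv}(\epsilon y)$, i.e.\ $\delta:=\psi(\epsilon y)-\epsilon y$ satisfies $\hat v(\delta)>v(\epsilon)$. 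Using the cone property once more, $z:=\epsilon^{-1}\psi(\epsilon y)=y+\epsilon^{-1}\delta\in\mathcal C_0({}^*Y)$, and $\hat v(z-y)=\hat v(\delta)-v(\epsilon)>0$.

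To conclude, apply the valuative definition of the tangent cone to $z$ with threshold $0$: there exist $x\in{}^*Y$ and $a\in{}^*\mathbb R_{>0}$ with $\hat v(x)>0$ and $\hat v(ax-z)>0$. The ultrametric inequality then yields
\[
\hat v(ax-y)\geq \min\{\hat v(ax-z),\hat v(z-y)\}>0,
\]
so Proposition~\ref{description} gives $y\in C_0(Y)$. The chief subtlety is the scale mismatch between the infinitesimal domain $B$ of $\psi$ and the archimedean space $\mathbb R^n$: scaling by $\epsilon$ bridges the two but produces an error term $\epsilon^{-1}\delta$ of merely positive, not arbitrarily large, valuation, and it is precisely the strength of the existential characterisation in Proposition~\ref{description} that allows this error to be absorbed and the membership in $C_0(Y)$ to be recovered from a single pair $(x,a)$.
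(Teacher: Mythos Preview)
Your proof is correct and follows essentially the same path as the paper: invoke Proposition~\ref{stronger} to obtain a risometry $\psi$ between $\mathcal C_0({}^*X)\cap B$ and $\mathcal C_0({}^*Y)\cap B$ fixing $0$, scale a real point $y\in C_0(X)$ into $B$ by an infinitesimal, use $\hat{rv}(\psi(\epsilon y))=\hat{rv}(\epsilon y)$, and scale back. The only stylistic difference is in the final step: the paper packages it as the equality $\hat{rv}(\mathcal C_0({}^*X)\cap V^n)=\hat{rv}(\mathcal C_0({}^*Y)\cap V^n)$ together with $C_0(X)=\res(\mathcal C_0({}^*X)\cap V^n)$, whereas you unpack the same content via the valuative description of $\mathcal C_0({}^*Y)$ and Proposition~\ref{description}; the content is the same.
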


\begin{proof}
 By Proposition~\ref{stronger}, there is a risometry between $\mathcal C_p({}^*X)\cap B$ and $\mathcal C_p({}^*Y)\cap B$. The proof is completed by the following claim.
 
\textsc{Claim.} 
Let $X,Y\subseteq \mathbb R^n$ be semi-algebraic sets and fix $p\in \overline{X}\cap \overline{Y}$. If there is a risometry on $B$ taking $\mathcal C_p({}^*X)\cap B$ to $\mathcal C_p({}^*Y)\cap B$ and fixing $p$, then $C_p(X)=C_p(Y)$.  

 Put $p=0$ and let $\psi$ be the risometry in the hypothesis of the claim. For $x\in \mathcal C_0({}^*X)\cap V^n$ with $v(x)=0$, take $r\in {}^*\mathbb R_{>0}$ such that $rx\in B$. Then $\psi(rx)\in \mathcal C_0({}^*Y)$ and we can pick $s\in {}^*\mathbb R$ such that $\hat{rv}(s\psi(rx))=\hat{rv}(x)$; this is possible because $\hat{rv}(\psi(rx))=\hat{rv}(rx)$, using that $\psi$ is a risometry fixing 0. This shows that $\hat{rv}(\mathcal C_0({}^*X)\cap V^n)=\hat{rv}(\mathcal C_0({}^*Y)\cap V^n)$; whence $C_0(X)=\res(\mathcal C_0({}^*X)\cap V^n)=\res(\mathcal C_0({}^*Y)\cap V^n)=C_0(Y)$. 
\end{proof}

Now we turn our attention to the archimedean stratifications induced by t-stratifications in $^*\mathbb R$. 

\begin{Definition}
 Let $X$ be a subset of $\mathbb R^n$ and suppose that $(S_i)_i$ is a partition of $\mathbb R^n$ into semi-algebraic sets. If $({}^*S_i)_i$ is a t-stratification of $^*X$, we say that $(S_i)_i$ is an archimedean t-stratification of $X$.
\end{Definition} 
In~\cite[Section 7]{immi} the following was proved.

\begin{Fact}\label{inpaper} 
 Let $X$ be a semi-algebraic subset of $\mathbb R^n$ and $(S_i)_i$ be an archimedean t-stratification of $X$. Then $(S_i)_i$ is a Whitney stratification of $X$. 
\end{Fact}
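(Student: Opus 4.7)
The plan is to verify that $(S_i)_i$ satisfies the definition of a Whitney stratification by lifting the relevant convergence configurations to the non-standard field $^*\mathbb R$ and exploiting the translater structure of $(^*S_i)_i$ at infinitesimal scales. First I would check that each stratum $S_d$ is a $C^1$ submanifold of dimension at most $d$, together with the frontier condition. For $p \in S_d$, the $d$-translatability of $(^*S_i)_i$ on the maximal ball inside $^*S_{\geq d}$ containing $p$ produces a $d$-dimensional subspace $W \subseteq \overline{{}^*\mathbb R}^n$. Using the same orthogonality-and-o-minimality argument on $\Gamma$ that appears in the proof of Theorem~\ref{main}, $W$ may be chosen independently of the (small enough) scale. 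Pushing forward through the residue map, and using the curve description of Proposition~\ref{curvedef2} together with the transfer principle, identifies $S_d$ near $p$ with the graph of a differentiable map over a $d$-plane of $\mathbb R^n$, whose tangent plane at $p$ is the standard realisation of $W$. The frontier condition follows by similar considerations applied to the definable family of maximal translatable balls.

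For Whitney condition (a), suppose $p_k \in S_e$ converges to $p \in S_d$ with $e > d$ and $T_{p_k}S_e \to \tau$ in the Grassmannian. Using $\aleph_1$-saturation of $^*\mathbb R$, realise a tail of the sequence by a single point $p^* \in {}^*S_e$ with $\hat{v}(p^* - p) > 0$ whose standard part and standard tangent recover $p$ and $\tau$ respectively. The translater witnessing $e$-translatability on the maximal ball around $p^*$ inside $^*S_{\geq e}$ produces an $e$-dimensional direction space $W^*$ whose residue, via the identification from the first step, is exactly $\tau$. Simultaneously, the translater witnessing $d$-translatability on the strictly larger maximal ball around $p^*$ inside $^*S_{\geq d}$ --- which by the choice of scales also contains $p$ --- yields the $d$-dimensional space $W$ with $\overline{\pi}(W)$ corresponding to $T_pS_d$. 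The nesting of the two balls, combined with Proposition 3.19 of~\cite{immi} used in the same way as in Lemma~\ref{corollary3.22}, forces $W \subseteq W^*$. Passing to residues gives $T_pS_d \subseteq \tau$, which is Whitney~(a).

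For Whitney condition (b) one supplements the previous setup with a sequence $q_k \in S_d$ converging to $p$ so that the secant directions $\dir(p_k - q_k)$ converge to a line $\ell$; the goal is $\ell \subseteq \tau$. Saturating, lift both sequences to non-standard points $p^* \in {}^*S_e$ and $q^* \in {}^*S_d$, and consider the translater witnessing $e$-translatability on the ball of valuative radius $\hat{v}(p^* - q^*)$ centred at $p^*$. By property (iv) in Definition~\ref{translaters}, writing $q^* - p^* = \alpha_x(z) - z$ for some choice of $x$ and $z$ forces $\dir(q^* - p^*) \in W^*$. Taking residues yields $\ell \subseteq \tau$, as desired.

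The main obstacle is the compatibility statement $W \subseteq W^*$ used in the Whitney (a) step: it is precisely here that the hierarchical nature of a t-stratification is essential, since one must argue that the $e$-translater on the inner ball refines the $d$-translater on the outer ball in such a way that the direction spaces embed. This is a consequence of the translatability condition applied simultaneously at two scales, plus the maximality built into the definition of a t-stratification, and it is exactly the type of argument formalised in Section 3 of~\cite{immi}. An alternative, more geometric route avoids the abstract refinement result by using Proposition~\ref{curvedef2} to describe tangent spaces through definable curves and observing that the translaters preserve $^*S_e$ while producing controllable variations in the $W$-directions, so that curves in $^*S_e$ are carried to curves in $^*S_e$ whose tangents lie in the residue of $W^*$.
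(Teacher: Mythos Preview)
The paper does not give a proof of this statement at all: it is recorded as a \emph{Fact} and is explicitly attributed to \cite[Section~7]{immi} in the sentence immediately preceding it. There is therefore no ``paper's own proof'' to compare your argument against; in this paper the result is simply quoted.

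That said, your sketch is in the right spirit --- the proof in \cite{immi} does proceed by lifting limit configurations to the non-standard model and reading off the relevant tangent and secant directions from the translater data --- but your Whitney~(b) step has a genuine gap. You propose to use the $e$-translater on the ball of valuative radius $\hat v(p^*-q^*)$ around $p^*$, and then to write $q^*-p^*=\alpha_x(z)-z$. This cannot be done: any ball on which $({}^*S_i)_i$ is $e$-translatable must lie inside ${}^*S_{\ge e}$, whereas $q^*\in{}^*S_d$ with $d<e$, so the closed ball of that radius already meets ${}^*S_d$ and no $e$-translater is available there. Even if one shrinks the ball, a translater respects the strata, so $\alpha_x(p^*)\in{}^*S_e$ can never equal $q^*\in{}^*S_d$; property~(iv) of Definition~\ref{translaters} therefore says nothing about $\dir(p^*-q^*)$ directly. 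The actual argument for (b) instead works with the $d$-translater on the larger ball containing both $p^*$ and $q^*$, uses it to move $q^*$ toward $p^*$ within ${}^*S_d$ while controlling the induced motion on ${}^*S_e$, and then compares with the $e$-dimensional space via the nesting $W\subseteq W^*$ you obtained in the (a) step.
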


Whitney stratifications are an important tool in singularity theory and they can be traced back to the work of H. Whitney~\cite{whitney}. A definition of them can be found in Section 7 of~\cite{immi}.

Imitating the definition of $(\mathcal C_{p,i})_i$ for a t-stratification $(S_i)_i$, we define the stratification $(C_{p,i})_i$ as follows.
\begin{Definition}\label{classicalstrata}
 For a partition $(S_i)_i$ of $\mathbb R^n$ and fixed $p\in \mathbb R^n$, we define the partition $(C_{p,i})_i$ as $C_{p,0}= C_p(S_0)$
and for $0\leq i<n$, $C_{p,i+1}= C_p(S_{\leq i+1})\setminus C_p(S_{\leq i})$.
\end{Definition}

Our last result, Theorem~\ref{intro2} in the introduction, is an immediate corollary of Theorem~\ref{main} and Fact~\ref{inpaper}.

\begin{Theorem}\label{maincor}
Suppose that $X$ is a semi-algebraic subset of $\mathbb R^n$, $p\in \mathbb R^n$ and $(S_i)_i$ is an archimedean t-stratification of $X$. Then $(C_{p,i})_i$ is an archimedean t-stratification of $C_p(X)$. Particularly, $(C_{p,i})_i$ is a Whitney stratification on $C_p(X)$.   
\end{Theorem}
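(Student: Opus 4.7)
The plan is to unpack the definition of ``archimedean t-stratification'' and then invoke Theorem~\ref{main} in the non-standard model $^*\mathbb R$. The key bridging observation is that taking the non-standard extension commutes with forming tangent cones for semi-algebraic sets, i.e.\ $^*(C_p(Z))=\mathcal C_p(^*Z)$ whenever $Z\subseteq \mathbb R^n$ is semi-algebraic and $p\in \mathbb R^n$. This is explicitly noted in the paper just before Proposition~\ref{description} and follows at once from the transfer principle applied to the alternative definition of the tangent cone given just after the basic properties in Section~1.

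With that in hand, the argument proceeds as follows. By hypothesis $(^*S_i)_i$ is a t-stratification of $^*X$. Apply Theorem~\ref{main} in the $\mathcal L_V$-structure $^*\mathbb R$ to this t-stratification at the point $p\in \mathbb R^n\subseteq {}^*\mathbb R^n$: this yields that the partition $(\mathcal C_{p,i})_i$ of $\mathcal C_p(^*X)$, defined as in Definition~\ref{conestrata} from the strata $^*S_i$, is a t-stratification of $\mathcal C_p(^*X)$. Now the commutation fact above gives
\begin{equation*}
\mathcal C_p(^*S_{\leq i})={}^*(C_p(S_{\leq i}))={}^*(C_{p,\leq i}),
\end{equation*}
so taking differences yields $\mathcal C_{p,i}={}^*(C_{p,i})$ for every $i$, and similarly $\mathcal C_p(^*X)={}^*C_p(X)$. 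Hence $(^*C_{p,i})_i$ is a t-stratification of $^*C_p(X)$, which by definition means that $(C_{p,i})_i$ is an archimedean t-stratification of $C_p(X)$. The final sentence of the theorem is then immediate from Fact~\ref{inpaper}.

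The only real content beyond citing Theorem~\ref{main} and Fact~\ref{inpaper} is the commutation $^*(C_p(Z))=\mathcal C_p(^*Z)$ for semi-algebraic $Z$ and standard $p$, and this is essentially a bookkeeping step via transfer applied to the $\varepsilon$--definition of the tangent cone. There is no real obstacle; the potential pitfall to keep in mind is that $p$ must be a standard point so that $^*p=p$ (otherwise the quantifiers ``$\forall \varepsilon\in\mathbb R_{>0}$'' versus ``$\forall \varepsilon\in{}^*\mathbb R_{>0}$'' would not transfer cleanly through the definition of $C_p(Z)$). Since the hypothesis gives $p\in \mathbb R^n$, this is automatic, and the proof is complete.
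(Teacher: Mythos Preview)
Your proof is correct and follows exactly the route the paper intends: the paper declares the result ``an immediate corollary of Theorem~\ref{main} and Fact~\ref{inpaper},'' and you have simply made explicit the bridging identity $^*C_p(Z)=\mathcal C_p({}^*Z)$ (noted just before Proposition~\ref{description}) needed to see that $(\mathcal C_{p,i})_i=({}^*C_{p,i})_i$. There is nothing to add.
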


This result could be phrased as archimedean t-stratifications induce archimedean t-stratifications and, hence, Whitney stratifications on tangent cones.

Our last remark is that the second statement of this theorem marks a contrast between archimedean t-stratifications and Whitney stratifications, as it is known that Whitney stratifications are not strong enough to induce Whitney stratifications on tangent cones, as the following example shows.
\begin{Example}\label{example}
 Consider the set 
\[
X:=\{(x,y,z)\in \mathbb R^3\ |\ x^3-y^2-z^2=0\}.
\] 
The sets $S_0:=\{0\}$, $S_1:=\emptyset$, $S_2:=X\setminus \{0\}$ and $S_3:=\mathbb R^3\setminus X$ constitute a Whitney stratification of $X$. Recall that the tangent cone $C_0(X)$ is the set $\mathbb R_{\geq 0}\times \{0\}\times \{0\}$. Following Definition~\ref{classicalstrata}, we obtain the sets $C_{0,0}=\{0\}$, $C_{0,1}=\emptyset$, $C_{0,2}=\mathbb R_{>0}\times \{0\}\times \{0\}$ and $C_{0,3}=\mathbb R^3\setminus (\mathbb R_{\geq 0}\times \{0\}\times \{0\})$. The dimension of $C_{0,2}$ is 1 and this makes impossible for  $(C_{0,i})_i$ to be a Whitney stratification of $C_0(X)$.

As an example of an archimedean t-stratification, the sets $S_0:=\{0\}$, $S_1:=\mathbb R_{>0}\times \{0\}\times \{0\}$, $S_2:=X\setminus \{0\}$ and $S_3:=\mathbb R^3\setminus (X\cup \mathbb R_{>0}\times \{0\}\times \{0\})$ form an archimedean t-stratification of $X$ (recall that this means that $(^*S_i)_i$ is a t-stratification of $^*X$). In this case we get the sets $C_{0,0}=\{0\}$, $C_{0,1}=\mathbb R_{> 0}\times \{0\}\times \{0\}$, $C_{0,2}=\emptyset$ and $C_{0,3}=\mathbb R^3\setminus (\mathbb R_{\geq 0}\times \{0\}\times \{0\})$, and they do constitute a Whitney stratification of $C_0(X)$ as expected.
 \end{Example}
\end{section}

\begin{section}{Comments on generalising the main results}\label{comments}
In the classical ambit, it is known that the existence of Whitney Stratifications is not restricted to those of semi-algebraic subsets of $\mathbb R^n$. They also exist for more generally definable subsets of $\mathbb R^n$. The main examples of such sets are the sub-analytic sets and the sets definable after adding the exponential map to $\mathcal L_{or}$. Perhaps the most general result on existence of Whitney stratifications is that they exist for any definable subset in any o-minimal expansion of $(\mathbb R,+,\cdot,0,1,<)$ (which includes both of the examples mentioned). This was proved by T. L. Loi in~\cite{loi}. 

This line of thought suggests that our results on stratifications of tangents cones may hold in wider generality. Below, we first discuss a generalisation of Theorem~\ref{main} on induced t-stratifications of tangent cones. We then comment on the correspondent extension of Theorem~\ref{maincor} on archimedean t-stratifications and Whitney stratifications of tangent cones. All the results stated depend on the proof of others (as accordingly discussed).

For the rest of this section, let $\mathcal L$ be a language containing $\mathcal L_{or}$ and $\mathcal T$ an o-minimal $\mathcal L$-theory containing $RCF$ (think of sub-analytic or exponetial-definable setting). The tangent cone of a subset of a model of $\mathcal T$ is defined just as before. When $R$ is non-archimedean and  $V$ is taken as before, t-stratifications can be defined with no further obstacle. One of the first relevant questions in this new setting is the existence of t-stratifications. According to Theorem~\ref{generalexist}, for this to be guaranteed we need $\mathcal T$ to satisfy Hypothesis 2.21 in~\cite{immi}. As a comment on the sort of problem this represents, one needs to show that the structure of $\text{RV}$ is stably embedded (in every model of $\mathcal T$), that $\mathcal T$ has the Jacobian property and that, roughly, t-stratifications of 1-dimensional definable sets exist already. Let us comment that it is informally conjectured that these conditions are satisfied if power boundedness of $\mathcal T$ is assumed.

If the existence of t-stratifications in non-archimedean models of $\mathcal T$ is given, we may then ask whether Theorem~\ref{main} holds. The partition $(\mathcal C_{p,i})_i$ is defined here identically as before for a t-stratification $(S_i)_i$. 

\begin{Theorem}[Assuming t-stratifications exist in models of $\mathcal T$]\label{genmain} Suppose that $\mathcal T$ is \\power bounded. Let $R$ be a non-archimedean model of $\mathcal T$ and suppose that $V$ is a $\mathcal T$-convex subring of $R$. If $X$ is an $\mathcal L\cup \{V\}$-de\-fi\-na\-ble subset of $R^n$ and $(S_i)_i$ is a t-stratification of $X$, then $(\mathcal C_{p,i})_i$ is a t-stratification of $\mathcal C_p(X)$.
\end{Theorem}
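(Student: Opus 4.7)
The plan is to follow the proof of Theorem~\ref{main} step by step and verify that every auxiliary ingredient carries over to the $\mathcal{T}$-convex setting. The dimension clause is formal: the identity $\tau(\mathcal{C}_p(A)) = \mathcal{C}_p(\tau(A))$ for coordinate projections $\tau$, used in the original argument, holds over any real closed field, so combined with $\dim S_{\leq d} \leq d$ it yields $\dim \mathcal{C}_{p,d} \leq d$.

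For the translatability clause, I would first generalise the apparatus of Section~1. The van den Dries--Lewenberg results of~\cite{driesI,driesII} on $T$-convexity are established for any power bounded o-minimal $T$, and since $\mathcal{T}$ is power bounded with $V$ a $\mathcal{T}$-convex subring of $R$, the theory $\mathcal{T}_{\text{convex}}$ admits definable Skolem functions (after naming an element of positive valuation). From this the proofs of Lemma~\ref{limitexistence}, Proposition~\ref{plainselection} and Proposition~\ref{curvedef2} go through verbatim, giving curve selection and one-sided limits for bounded $\mathcal{L}\cup\{V\}$-definable maps. Power boundedness of $\mathcal{T}$ also yields o-minimality of $\Gamma$ and its orthogonality to the residue field, which is the key ingredient for stabilising the tangent space along a family of shrinking balls.

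With these tools in place, the main argument runs in parallel with the proof of Theorem~\ref{main}. Take $u \in \mathcal{C}_0(S_d)$ and $B = B(u,>\mu)$ maximal with $B \cap \mathcal{C}_{0,\leq d-1} = \emptyset$, choose an $\mathcal{L}\cup\{V\}$-definable curve $\gamma\colon (0,1)\to S_d$ with $\gamma(t)/t \to u$, and set $B_t := B(\gamma(t),>\mu+v(t))$. Weak o-minimality of $(R,V)$, combined with curve selection and the limit existence lemma, forces $(S_i)_i$ to be $d$-translatable on $B_t$ for all sufficiently small $t$; otherwise a definable curve $\eta \subseteq S_{\leq d-1}\cap B_t$ would produce, via the limit $\eta(t)/t$, a point of $\mathcal{C}_{0,\leq d-1}\cap B$, contradicting maximality. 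Orthogonality of $\Gamma$ and $\overline{R}$ then forces $tsp_{B_t}(S_i)_i$ to stabilise at a fixed subspace $W_0$ for all small $t$. Invoking Lemma~\ref{corollary3.22} yields $\ulcorner B_t\urcorner$-definable translaters $(\alpha_{t,x})_{x}$ on each $B_t$; a compactness argument ensures that they are uniformly defined by finitely many formulas, and the candidate translater on $B$ is
\[
\alpha_x(z) := \lim_{t\rightarrow 0^+} t^{-1}\alpha_{t,tx}(z_t), \qquad z_t := \gamma(t) + t(z-u).
\]
The verification that $\alpha_x$ is well-defined, risometric and satisfies the four axioms of a translater witnessing $W_0$-translatability of $(\mathcal{C}_{0,i})_i$ on $B$ is then identical to the original proof.

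The main obstacle is ensuring that Lemma~\ref{corollary3.22}, which rests on Proposition~3.19 of~\cite{immi}, survives the generalisation. However, assuming that t-stratifications exist in models of $\mathcal{T}$ already entails the structural hypotheses underlying Halupczok's framework (stable embeddedness of $\text{RV}$, the Jacobian property, and so on), which are precisely what drives the proof of 3.19 in~\cite{immi}; so the lemma transfers without modification. Power boundedness of $\mathcal{T}$ is the only additional assumption genuinely used beyond the existence of t-stratifications, and it enters solely through the orthogonality of $\Gamma$ and the residue field needed to stabilise $W_0$ along $t$ — without it, the tangent space could oscillate with $t$ and the limiting construction of $\alpha_x$ would collapse.
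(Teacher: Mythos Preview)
Your proposal is correct and follows essentially the same route as the paper: verify that the curve-based tools of Section~1 extend to the $\mathcal T$-convex setting via the van den Dries--Lewenberg theory, note that power boundedness yields o-minimality of $\Gamma$ (the one place it is genuinely used, in stabilising $W_0$), and then rerun the proof of Theorem~\ref{main} verbatim. The paper's own proof is in fact considerably terser than yours, and makes one point you slightly obscure: the curve selection lemma and the limit-existence lemma hold already from $\mathcal T$-convexity alone (Section~2.5 of~\cite{driesII}), without invoking power boundedness, so the latter hypothesis really does enter only at the $\Gamma$--$\overline R$ orthogonality step.
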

\begin{proof}
 The description of $\mathcal C_p(X)$ via definable curves, Proposition~\ref{curvedef2}, is valid in this setting since all the results in Subsection~\ref{curvedefinition} hold for ($R,V$). Actually, this is true regardless of power boundedness, taking into account the generality of the results in Section 2.5 of~\cite{driesII}.  The proof of Theorem~\ref{main} can be then repeated here. The o-minimality of the value group, used at one step of the proof, comes from the power boundedness of $\mathcal T$ (by Proposition 4.3 in~\cite{driesII}). The rest of the proof fits here with only minor modifications.
\end{proof}

Now we discuss the case of archimedean t-stratifications and Whitney stratifications. We assume $\mathcal T$ is power bounded. Considering $\mathbb R$ as an $\mathcal L$-structure, we may wonder whether Theorem~\ref{maincor} holds true for definable sets in this enlarged language. After Theorem~\ref{genmain}, it is obvious that the first part of it holds true, i.e. archimedean t-stratifications induce archimedean t-stratifications on tangent cones. But the second part of this result needs to be proved. 

\vspace{0.1cm}
{\textsc{Conjectured Lemma}}. Let $X$ be an $\mathcal L$-definable subset of $\mathbb R^n$ and suppose that $(S_i)$ is an archimedean t-stratification of $X$. Then $(S_i)_i$ is a Whitney stratification of $X$.

\end{section}

\bibliographystyle{plain}
\bibliography{EGRArxivSub10Sep2015}

\end{document}